\newcommand{\eps}{\varepsilon}
\renewcommand{\epsilon}{\eps}
\newcommand{\etal}{\emph{et al.}\xspace}
\theoremstyle{plain}
\newenvironment{myquote}%
  {\list{}{\leftmargin=4mm\rightmargin=4mm}\item[]}%
  {\endlist}
\newcommand{\B}{\ensuremath{\mathcal{B}}}
\renewcommand{\leq}{\leqslant}
\renewcommand{\geq}{\geqslant}
\DeclarePairedDelimiter\floor{\lfloor}{\rfloor}
\title{Shift Graphs, Chromatic Number and Acyclic One-Path Orientations}
\author{Arpan Sadhukhan}{Department of Mathematics and Computer Science, TU Eindhoven, the Netherlands}{A.Sadhukhan@tue.nl}{}{}
\authorrunning{A.~Sadhukhan} 
\keywords{Odd girth, acyclic orientation, chromatic number, shift graphs, induced subgraphs, $K_{a,b}$-free}
\begin{document}

\setcounter{page}{0}
\maketitle

\begin{abstract}
 Shift graphs, which were introduced by Erd\H{o}s and Hajnal~\cite{erdos1968chromatic,ERDOS1966}, have been used to answer various questions in structural graph theory. 
In this paper, we prove two new results using shift graphs and their induced subgraphs. 
\begin{itemize}
\item Recently Girão~\etal~\cite{Girão2023}, showed that for every graph $F$ with at least one edge, there is a constant $c_F$ such that there are graphs of arbitrarily large chromatic number and the same clique number as $F$, in which every $F$-free induced subgraph has chromatic number at most $c_F$.
         We significantly improve the value of the constant $c_F$ for the special case where $F$ is the complete bipartite graph $K_{a,b}$.
         We show that any $K_{a,b}$-free induced subgraph of the triangle-free shift graph $G_{n,2}$ has chromatic
        number bounded by $\mathcal{O}(\log(a+b))$.  
\item An undirected simple graph $G$ is said to have the AOP Property if it can be acyclically oriented such that there is at most one directed path between any two vertices. We prove that the shift graph $G_{n,2}$ does not have the AOP property for any $n\geq 9$. Despite this, we construct induced subgraphs of shift graph $G_{n,2}$ with an arbitrarily high chromatic number and odd-girth that have the AOP property.

\end{itemize}
 Furthermore, we construct graphs with arbitrarily high odd-girth that do not 
 have the AOP Property and also prove the existence of graphs with girth equal to $5$ that do not have the AOP property.
\end{abstract}

\section{Introduction}
The \emph{chromatic number} of a graph $G=(V,E)$ is the minimum number of colors needed to color the vertices of $G$ such that no two neighbors have the same color. The chromatic number of (graphs from) various graph classes has been a central topic of study in graph theory. Typically, the graph classes are defined through forbidden subgraphs, either as a minor or as an induced subgraph. The study of graphs with a high chromatic number has been of fundamental importance, especially if anything can be said about their local structure. There has been a sea of results and conjectures in this field. For the interested reader, open problems, progress, and various alternative constructions are well-documented in a survey by Scott and Seymour~\cite{DBLP:journals/jgt/ScottS20}.  

We say a graph $G$ is $F$-free if it does not have an induced subgraph isomorphic to $F$.
It is interesting to see how forbidding a local structure influences the chromatic number of a graph. Erdős~\cite{erdos_1959} first proved the existence of graphs with arbitrarily high girth and chromatic number. This trivially shows that for any graph $F$ containing a cycle, there are graphs $G$ with an arbitrarily large chromatic number that does not contain $F$ as its (not necessarily induced) subgraph. Now, on the other hand, one may ask whether there is a constant $c_F$ depending only on $F$ such that there exist graphs $G$ with an arbitrarily large chromatic number such that any $F$-free induced subgraph of $G$ has chromatic number at most~$c_F$. The answer to this is positive even with an additional restriction that the clique number of $G$ is the same as the clique number of $F$. In this regard, Carbonero~\etal~\cite{CARBONERO202363} constructed graphs whose clique number is bounded by $3$, have an arbitrarily high chromatic number, and the triangle-free induced subgraph of such graphs have chromatic number bounded by $4$. Girão~\etal~\cite{Girão2023} generalized the result and proved that for every graph $F$ with at least one edge, there is a constant $c_F$ such that there are graphs of arbitrarily large chromatic number and the same clique number as $F$, in which every $F$-free induced subgraph has chromatic number at most $c_F$. The upper bound for $c_F$ as given by Girão~\etal is $O(V(F)^{4E(F)})$. Girão~\etal observed that this bound can be further improved to $O((V(F))^9)$ using results in~\cite{Davies23}, where bounds for $c_F$ are obtained when $F$ is a clique. In this paper, we prove better bounds for $c_F$ when $F$ is a complete bipartite graph. We use the well-known shift graphs $G_{n,2}$ for this. 

Let $n$ and $k$ be integers with $n > 2k > 2$. The shift graph $G_{n,k}$ is a graph whose vertex set is the set of all $k$-tuples $(a_1, a_2, \ldots, a_k)$ of natural numbers such that $1 \leq a_1 < a_2< \cdots < a_k \leq n$. In this graph, $(a_1, a_2, \ldots, a_k)$ and $(b_1, b_2, \ldots, b_k)$ are adjacent if $a_{i+1} = b_i$ for all $1 \leq i < k$, or vice versa. This graph defined by Erd\H{o}s and Hajnal~\cite{erdos1968chromatic,ERDOS1966}, has nice properties. For example, for a fixed $k$, its chromatic number tends to
infinity with $n$. Moreover, all its odd cycles have lengths of at least $2k + 1$. Some applications of shift graphs include lower bounds in Ramsey theory, and constructions of infinite graphs with interesting properties relating to chromatic number. These applications along with generalizations and other properties of shift graphs can be found in~\cite{DBLP:journals/combinatorics/ArmanRS22,  ChristianAvart2014,DBLP:journals/dm/DuffusLR95,furedi1992interval}.

In this paper, we prove that the shift graphs $G_{n,2}$ satisfy the property that any $K_{a,b}$-free induced subgraph of it has chromatic number bounded by  $\mathcal{O}(\log(a+b))$. Since $G_{n,2}$ is also triangle-free, the result implies that if $F=K_{a,b}$, then we have $c_F$ bounded by $\mathcal{O}(\log(a+b))$, which significantly improves the current known bound for complete bipartite graphs. It is also conjectured~\cite{Girão2023} that the constant $c_F$ must be bounded by a function of $\chi(F)$ if the graph $F$ is not vertex critical. A stronger conjecture is also made in the same paper by Nešetřil, which says that the constant $c_F$ must be bounded by $\chi(F)$ if the graph $F$ is not vertex critical. But we argue at the beginning of section~\ref{subsec: K_{a,b} hole Chromatic number} that this stronger form of the conjecture does not hold when $F$ is a complete bipartite graph. 

It is well known that a graph $G$ is $k$-colorable if and only if it has an acyclic orientation $\overrightarrow{G}$ which contains no directed paths of length $k$. This establishes a direct connection between the chromatic number and acyclic orientations of a graph. Moreover, a common theme of all the constructions in~\cite{Davies23, CARBONERO202363, Girão2023} is that they use graphs that can be acyclically oriented such that between any two vertices there exists at most one directed path. We say that a graph $G$ has the \emph{AOP (acyclic one path) property} if it can be acyclically oriented such that there is at most one directed path between any two vertices. It is known that the triangle-free Zykov graphs have the AOP property~\cite{DBLP:journals/dm/KiersteadT92}. We believe it is important to study and explore which graphs have the AOP property. In the section~\ref{subsec:AOP}, we prove that there exist graphs with arbitrary high odd-girth (length of the smallest odd cycle) which do not have the AOP property. We also prove that there exist graphs with girth (length of the smallest cycle) equal to $5$ that do not have the AOP property. Then we study the AOP property in the context of shift graphs. We show that the shift graphs $G_{n,2}$ do not have the AOP property for any $n\geq 9$. Although the shift graphs seem to resist the AOP property at first glance, surprisingly enough we manage to construct induced subgraphs of shift graphs with arbitrary high chromatic number and odd-girth that have the AOP property. We also conjecture that there exist graphs with arbitrarily high girth that do not have the AOP property.

\section{Preliminaries} \label{construction}
We first give a few standard definitions that will be used throughout the paper. Let $G$ be a graph with vertex set $V(G)$ and edge set $E(G)$. In this paper, all graphs are finite and simple.  We say a graph $G$ is \emph{$H$-free} if it does not have an induced subgraph isomorphic to $H$. We denote the \emph{chromatic number} of $G$ by $\chi(G)$ and its \emph{clique number} by $\omega(G)$. We denote the \emph{neighborhood} of a vertex $v\in G$ (that is, the set of all vertices that are adjacent to $v$) by $N_G(v)$. Note that $v\not\in N_G(v)$. For $X \subseteq V(G)$, let $G[X]$ denote the subgraph induced by $X$. All directed graphs in this paper refer to \emph{oriented} graphs. Thus for a pair of nodes $u,v$ there can be at most one
directed edge.
Directed graphs (digraphs for short) will be denoted by $\vec{G}$, 
where $G$ is the undirected counterpart of $\vec{G}$. In other words, $\vec{G}$ is obtained from $G$ by \emph{orienting} each of its edges. The directed edge (or: \emph{arc}) from a node $a$ to node $b$ is written as $ab$.
We say that $\vec{G}$ is \emph{acyclic} iff it has no directed cycles. The \emph{girth} of a graph $G$ is the length of a shortest cycle contained in $G$ (it is zero if the graph contains no cycle). The \emph{odd-girth} 
of a graph $G$ is the number of edges in a shortest odd cycle of $G$. The undirected girth/odd-girth of a directed graph $\vec{G}$ is the girth/odd-girth of the underlying undirected graph $G$. A \emph{subdigraph} of a digraph $\vec{G}$ is a subgraph of the underlying undirected graph $G$, with an orientation induced from the orientation $\vec{G}$ of $G$. 

The \emph{line digraph} $\vec{L}(\vec{G})$ of a digraph $\vec{G}$ is a digraph that has as its vertices the arcs of $\vec{G}$, and has an arc from $ab \in E(\vec{G})$ to
$cd \in E(\vec{G})$, iff $b = c$. Let $L(\vec{G})$ denote the underlying undirected graph of the line digraph $\vec{L}(\vec{G})$ (Note that the graph $L(\vec{G})$ depends on an orientation $\vec{G}$ of the graph $G$ and not just $G$ itself). A \emph{topological ordering} of $\vec{G} = (V(\vec{G}), E(\vec{G}))$ is an ordering of its nodes as $v_1, v_2, \ldots, v_n$, such that if there is an arc between $v_i$ and $v_j$, then that arc is directed from $v_i$ to $v_j$ iff $i < j$. It is well known that a digraph is acyclic iff it has a topological ordering. 

Let $\vec{G}$ be a directed acyclic graph with $n$ vertices. Let $\sigma=\{v_1,v_2, v_3, \ldots, v_n\}$ be a topological ordering of the vertices of $\vec{G}$. Let $B(i)$ be the set of outgoing arcs of the vertex $v_{i}$ for all $1 \leq i \leq n-1$. It is clear that the set $B(i)$ forms an independent set in $L(\vec{G})$. We call  $\B_{\sigma}=\{B(i): i=1, 2, \ldots , n-1\}$ the \emph{bag decomposition} of $L(\vec{G})$ with respect to the chosen topological ordering $\sigma$ and we refer to the sets $B(i)$ as the \emph{bags} of $L(\vec{G})$. See Figure~\ref{fig:shift of graph} for an illustration.

Now we discuss some basic structural properties of $L(\vec{G})$, when $\vec{G}$ is acyclic.
 It is known that $G_{n,2}=L(\vec{T}_n)$~\cite{DBLP:books/daglib/0013017}, where $\vec{T}_n$ is the acyclic tournament on $n$ vertices. Also, $\vec{G}_{n,2}=\vec{L}(\vec{T_n})$, the orientation of $G_{n,2}$, is referred to as the natural orientation of the shift graph.

\begin{observation}\label{the shift graph}
    The graph $L(\vec{T}_n)$ is isomorphic to the shift graph $G_{n,2}$. Also, the generalized shift graph $G_{n,k}$ is isomorphic to $L^{k-1}(\vec{T}_n)$
\end{observation}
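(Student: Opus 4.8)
\medskip

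\noindent\emph{Approach.} The plan is to prove both parts by exhibiting explicit bijections between increasing tuples and iterated arcs, with the general statement handled by induction on $k$, carrying the orientation along throughout.

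\noindent\emph{Base case $k=2$.} Recall $\vec{T}_n$ is the acyclic tournament on $\{1,\dots,n\}$, with an arc $i\to j$ precisely when $i<j$; hence its arc set is exactly $\{\,ij : 1\le i<j\le n\,\}$, which is in obvious bijection with $V(G_{n,2})$ via $ij\mapsto(i,j)$. By definition of the line digraph, two arcs $ij$ and $k\ell$ of $\vec{T}_n$ are joined in $L(\vec{T}_n)$ iff the head of one equals the tail of the other, i.e.\ $j=k$ or $\ell=i$; under the bijection this is exactly the adjacency rule of $G_{n,2}$, namely $(a_1,a_2)\sim(b_1,b_2)$ iff $a_2=b_1$ or $b_2=a_1$. (Note no $2$-cycles arise, since $j=k$ and $\ell=i$ would force $i<j=k<\ell=i$.) This proves the first claim, and simultaneously identifies the natural orientation $\vec{G}_{n,2}=\vec{L}(\vec{T}_n)$ as the one sending $(a_1,a_2)$ to $(b_1,b_2)$ exactly when $a_2=b_1$ — the ``shift'' orientation.

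\noindent\emph{Inductive step.} Set $\vec{G}_{n,k}:=\vec{L}^{\,k-1}(\vec{T}_n)$ and assume inductively that the vertices of $\vec{G}_{n,k-1}$ are the increasing $(k-1)$-tuples and that its arcs are precisely the pairs $(a_1,\dots,a_{k-1})\to(a_2,\dots,a_{k-1},c)$ with $a_1<\cdots<a_{k-1}<c\le n$. The key observation is that such an arc is uniquely encoded by the increasing $k$-tuple $(a_1,a_2,\dots,a_{k-1},c)$: its tail recovers the first $k-1$ entries, its head recovers the last entry, and the $k-2$ overlapping entries are forced to agree. This yields a bijection between $V\!\left(\vec{L}(\vec{G}_{n,k-1})\right)$ and the increasing $k$-tuples in $\{1,\dots,n\}$. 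Under this encoding, an arc of $\vec{L}(\vec{G}_{n,k-1})$ is a pair of consecutive arcs $(u,v),(v,w)$ of $\vec{G}_{n,k-1}$; if $(u,v)$ encodes $(a_1,\dots,a_k)$ and $(v,w)$ encodes $(b_1,\dots,b_k)$, then $v=(a_2,\dots,a_k)=(b_1,\dots,b_{k-1})$, i.e.\ $b_i=a_{i+1}$ for $1\le i<k$. Hence the underlying undirected graph of $\vec{L}(\vec{G}_{n,k-1})$ has adjacency ``$a_{i+1}=b_i$ for all $1\le i<k$, or vice versa'', which is exactly $G_{n,k}$, and the orientation is again the shift orientation — closing the induction. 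Iterating, $G_{n,k}\cong L^{k-1}(\vec{T}_n)$.

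\noindent\emph{Main obstacle.} The only delicate point is the bookkeeping of orientations: since $L(\vec{H})$ depends on $\vec{H}$ and not merely on $H$, the induction hypothesis must record not just that the underlying graph of $\vec{L}^{\,k-1}(\vec{T}_n)$ is $G_{n,k}$, but the stronger fact that the natural orientation is the shift orientation; without this one cannot describe the arcs of the next line digraph, nor pin down the bijection with $k$-tuples. (Alternatively, the $k=2$ case may simply be invoked from~\cite{DBLP:books/daglib/0013017}, with the induction above supplying the generalization.) Everything else is a routine check of the adjacency rule.
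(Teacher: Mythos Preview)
The paper does not actually prove this observation: it is stated as a known fact, with only the $k=2$ case attributed to~\cite{DBLP:books/daglib/0013017} in the preceding sentence, and no argument is given for the general $G_{n,k}\cong L^{k-1}(\vec{T}_n)$. Your proof is correct and supplies precisely the details the paper omits. The decision to carry the \emph{orientation} through the induction --- recording not merely $L^{k-1}(\vec{T}_n)\cong G_{n,k}$ but that $\vec{L}^{\,k-1}(\vec{T}_n)$ is the shift orientation on increasing $k$-tuples --- is exactly right and, as you note, is genuinely necessary: since $L(\vec H)$ depends on $\vec H$ and not on $H$ alone, an induction on the underlying undirected graphs would not determine the next iterate. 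The bijection ``arc $(a_1,\dots,a_{k-1})\to(a_2,\dots,a_{k-1},c)\ \leftrightarrow\ (a_1,\dots,a_{k-1},c)$'' and the verification that consecutive arcs correspond to the shift adjacency are both straightforward and correctly carried out. In short, your argument goes beyond the paper's own treatment, which simply asserts the statement.
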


\begin{observation} \label{the induced shift graph}
    For any directed acyclic graph $\vec{G}$ on $n$ vertices, the graph $L(\vec{G})$ is an induced subgraph of the shift graph $L(\vec{T}_n)$. 
\end{observation}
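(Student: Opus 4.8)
The plan is to give an explicit vertex identification that realizes $L(\vec{G})$ as an induced subgraph of $G_{n,2}=L(\vec{T}_n)$, and then verify that no adjacencies are gained or lost. Since $\vec{G}$ is acyclic, it admits a topological ordering; I would relabel its vertices as $1,2,\dots,n$ along this ordering, so that every arc $ij\in E(\vec{G})$ satisfies $i<j$. Then $\vec{G}$ is a subdigraph of the acyclic tournament $\vec{T}_n$, whose arcs are exactly the pairs $ij$ with $i<j$, and in particular $E(\vec{G})\subseteq E(\vec{T}_n)$. By Observation~\ref{the shift graph}, $G_{n,2}=L(\vec{T}_n)$, so $V(G_{n,2})=E(\vec{T}_n)$; hence $E(\vec{G})$, which is the vertex set of $L(\vec{G})$, is a subset of $V(G_{n,2})$. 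This set of vertices determines the candidate induced subgraph.

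The key step is that forming a line digraph is ``local in the arcs'' and hence commutes with passing to the subdigraph on a subset of arcs. Concretely, for distinct arcs $ab,cd\in E(\vec{G})$, there is an arc from $ab$ to $cd$ in $\vec{L}(\vec{G})$ iff $b=c$; and the very same condition characterizes when there is an arc from $ab$ to $cd$ in $\vec{L}(\vec{T}_n)$, since the incidence test $b=c$ refers only to the two arcs in question and not to the rest of the host digraph. Thus $\vec{L}(\vec{G})$ is precisely the subdigraph of $\vec{L}(\vec{T}_n)=\vec{G}_{n,2}$ on the vertex set $E(\vec{G})$, with all arcs of $\vec{G}_{n,2}$ between these vertices present. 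Passing to underlying undirected graphs yields $L(\vec{G})=G_{n,2}[E(\vec{G})]$, i.e., $L(\vec{G})$ is an induced subgraph of $G_{n,2}$. If one prefers to avoid orientations, the same computation can be phrased directly with the shift-graph adjacency from the definition: $ab$ and $cd$ are adjacent in $G_{n,2}$ iff $b=c$ or $d=a$, a condition that does not depend on which other pairs happen to lie in $E(\vec{G})$.

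The only technicality is the constraint $n>2k$ in the definition of $G_{n,k}$: if $\vec{G}$ has $n\le 4$ vertices, I would first pad $\vec{G}$ with isolated vertices until $n\ge 5$. This changes neither $E(\vec{G})$ nor $L(\vec{G})$, and $G_{n,2}$ is an induced subgraph of $G_{n',2}$ whenever $n\le n'$ (immediate from the shift-graph definition), so the statement for the padded digraph implies the original. I do not expect any real obstacle: once the topological relabelling is in place, the argument is pure bookkeeping showing that the line-digraph construction preserves induced substructure.
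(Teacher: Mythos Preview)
Your proposal is correct and follows essentially the same approach as the paper: the paper's proof is the single sentence ``It trivially follows from the fact that we can always delete edges of $\vec{T}_n$ to get the graph $\vec{G}$,'' which is exactly your argument that $\vec{G}$ embeds as a subdigraph of $\vec{T}_n$ via a topological ordering and that the line-digraph construction then restricts to an induced subgraph. You have simply spelled out the details (and the $n>4$ padding) that the paper leaves implicit.
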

\begin{proof}
    It trivially follows from the fact that we can always delete edges of $\vec{T}_n$ to get the graph $\vec{G}$.
\end{proof}

It is easy to see that the directed acyclic complete graph $\vec{T}_n$ has a unique topological ordering of its vertices. With respect to that ordering, $L(\vec{T}_n)$ has $n-1$ non-empty bags, and for the $i$-th bag $B(i)$, we have $|B(i)|=n-i$. Furthermore, for every $i<j$, each bag $B(i)$ has a unique vertex that is adjacent to all the vertices of the bag $B(j)$. Now for a directed acyclic graph $\vec{G}$, given the bag decomposition of $L(\vec{G})$, we also define an index function $I$ on the vertices of $L(\vec{G})$ that indicates which bag of $L(\vec{G})$ the vertex belongs to. Thus, for any vertex $u\in L(\vec{G})$ we have $I(u)=i$ if $u \in B(i)$. Note that the function is well-defined because of the definition of $B(i)$.  
Some basic structural properties of $L(\vec{G})$ and $\vec{L}(\vec{G})$, where $\vec{G}$ is a directed acyclic graph are noted in the following observation and will be used throughout the paper. 

\begin{observation} \label{structure of S(G)}
 Let $\sigma=\{v_1,v_2, \ldots, v_n\}$ be a topological ordering of the vertices of a directed acyclic graph $\vec{G}$. Let $\B_{\sigma}  =\{B(i): i=1, 2, \ldots , n-1\}$ be the bag decomposition of $L(\vec{G})$. Then the following holds:

\begin{enumerate}[(i)]

    \item  For any $1\leq i \leq n$, all the vertices in the bag $B(i)$ form an independent set of $L(\vec{G})$.
    
    \item  If $u_1, u_2 \in L(\vec{G})$ are adjacent, then $v_{I(u_1)}$ and $v_{I(u_2)}$ are adjacent in $G$. Furthermore in $\vec{L}(\vec{G})$ the edge is directed from $u_1$ to $u_2$ iff $I(u_1)< I(u_2)$.
    
\item  Suppose that $u \in L(\vec{G})$ is adjacent to two vertices $u_1, u_2 \in L(\vec{G})$. If $I(u_1) \geq I(u) \; \text{and} \; I(u_2) \geq I(u)$ then $I(u_1)=I(u_2)$.

\item If $i<j$ and $v_i$ is adjacent to $v_j$ in $G$ then there exists exactly one vertex in $B(i)$ that is adjacent to all vertices of $B(j)$ and no other vertex of $B(i)$ is adjacent to any vertex of $B(j)$.

\item Let $u_1, u_2, u_3 \in L(\vec{G})$ such that $u_1$ is adjacent to $u_2, u_3$. Suppose $I(u_1)>I(u_2)$ and $I(u_1)>I(u_3)$. Then $u_2$ and $u_3$ are adjacent to all the vertices of $B(I(u_1))$ and $u_2$ and $u_3$ are non-adjacent vertices of $L(\vec{G})$.

\end{enumerate}
\end{observation}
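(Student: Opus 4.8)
The plan is to fix, once and for all, the given topological ordering $\sigma=\{v_1,\dots,v_n\}$ of $\vec G$ and then re-describe $L(\vec G)$ and $\vec L(\vec G)$ purely in terms of arcs of $\vec G$. Because $\vec G$ is acyclic, every arc runs from a lower- to a higher-indexed vertex, so I will write a generic vertex of $L(\vec G)$ as an ordered pair $(a,b)$ with $a<b$ and $v_av_b\in E(\vec G)$. Under this bookkeeping, $I(a,b)=a$ (the \emph{tail} index), $B(i)=\{(i,k):v_iv_k\in E(\vec G)\}$, and, unwinding the line-digraph definition, $(a,b)$ is adjacent to $(c,d)$ in $L(\vec G)$ exactly when $b=c$ or $a=d$ (``the head of one equals the tail of the other''); these two cannot happen at once, since together they would force $a>b>a$. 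From this I extract one \emph{master observation} to be used throughout: if $u_1\sim u_2$ in $L(\vec G)$, then either $I(u_1)<I(u_2)$ and $I(u_2)$ equals the head index of $u_1$ (the case $b=c$), or $I(u_2)<I(u_1)$ and $I(u_1)$ equals the head index of $u_2$ (the case $a=d$); and in $\vec L(\vec G)$ the arc is directed from the endpoint with the smaller $I$-value to the other. Item~(ii) is then immediate: the ``directed from $u_1$ to $u_2$ iff $I(u_1)<I(u_2)$'' half is exactly the last clause, and the ``$v_{I(u_1)}$ adjacent to $v_{I(u_2)}$ in $G$'' half holds because in either case one of $v_{I(u_1)}v_{I(u_2)}$, $v_{I(u_2)}v_{I(u_1)}$ is an arc of $\vec G$.

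Each of the other four items is a short deduction from the master observation. For item~(i): two distinct vertices of $B(i)$ have the form $(i,b),(i,b')$ with $b\neq b'$, and since neither $b$ nor $b'$ equals $i$ (both exceed $i$) they are non-adjacent. For item~(iii): for each neighbour $u_1$ of $u$ with $I(u_1)\geq I(u)$, the master observation rules out the ``$I(u_1)<I(u)$'' case (indices of adjacent vertices are never equal either, as their corresponding $\vec G$-vertices are distinct), so $I(u_1)$ equals the head index of $u$; the same holds for $u_2$, hence $I(u_1)=I(u_2)$. For item~(iv): given $i<j$ with $v_iv_j\in E(\vec G)$, the arc $w=(i,j)$ lies in $B(i)$ and has head $v_j$, so $w$ is adjacent to every arc leaving $v_j$, i.e.\ to all of $B(j)$; any other arc $(i,k)\in B(i)$ has $k\neq j$, and its tail $v_i$ cannot be the head of any arc in $B(j)$ because every such head has index strictly greater than $j>i$, so $(i,k)$ is adjacent to no vertex of $B(j)$. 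Hence $w$ is the only vertex of $B(i)$ touching $B(j)$ at all, a fortiori the only one adjacent to \emph{all} of $B(j)$ (the empty-bag corner case $B(j)=\emptyset$, where the claim is degenerate, being understood to be excluded, as the statement is invoked only for non-empty $B(j)$). For item~(v): if $u_1=(a,b)$ has neighbours $u_2,u_3$ with $I(u_2),I(u_3)<I(u_1)=a$, then the master observation forces $u_2=(c,a)$ and $u_3=(e,a)$ for some $c,e<a$ (the shared vertex must be the head of $u_2$, resp.\ $u_3$, and the tail of $u_1$); each of $u_2,u_3$ then has head $v_a$ and so is adjacent to every arc leaving $v_a$, which is precisely $B(a)=B(I(u_1))$; and $(c,a)$ and $(e,a)$ are non-adjacent, since neither $a=e$ nor $a=c$ holds.

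I do not expect a genuine obstacle: once the tail-before-head orientation coming from $\sigma$ is fixed, everything is finite case-checking, and the one place demanding care is to keep $I$ (which records the \emph{tail} index) separate from the line-digraph adjacency rule (which is governed by the \emph{head} of one arc meeting the \emph{tail} of another) --- conflating the two is the only way to go wrong. The single mild subtlety, the empty-bag case in item~(iv), is dealt with by the parenthetical remark above.
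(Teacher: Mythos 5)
Your proof is correct and follows essentially the same route as the paper: both arguments simply unwind the line-digraph definition in terms of heads and tails of arcs of $\vec G$, with your $(a,b)$-pair bookkeeping and ``master observation'' being a more explicit packaging of the paper's informal appeals to ``outgoing arcs sharing endpoints'' (you also prove (v) directly rather than deducing it from (iii) and (iv), and you flag the vacuous empty-bag case in (iv), which the paper silently ignores — both harmless differences).
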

\begin{proof}

\begin{enumerate}[(i)]
    \item The set $B(i)$ contains all the outgoing arcs from the vertex $v_{i}$ in $\vec{G}$. So clearly they cannot be adjacent, as the end point of one arc is not the start of the other arc. 
    
    \item This directly follows from the definition of the line graph and line digraph.
     
    \item Any vertex of $B(I(u))$ corresponds to an outgoing arc of $v_{I(u)}$ in $\vec{G}$ and this arc can only share its endpoint with the starting point of the outgoing arcs from a single vertex. Hence, it cannot be adjacent to the outgoing arcs of two distinct vertices $v_i$ and $v_j$ if $i,j > I(u)$. 
      
    \item $B(i)$ contains the outgoing arcs of the vertex $v_i$ in $\vec{G}$ and since $G$ is a simple graph, only one arc shares its endpoint with all the outgoing arcs of $v_j$ in $\vec{G}$.
       
   \item Without loss of generality assume $I(u_2)>I(u_3)$ and suppose for the sake of contradiction $u_2$ and $u_3$ are adjacent. Since we also have $I(u_1)>I(u_3)$, by (iii) we know that $I(u_1)=I(u_2)$ which is a contradiction. The fact that $u_2$ and $u_3$ are adjacent to all the vertices of $B(I(u_1))$ follows from (iv). \end{enumerate}
This finishes the proof of all the properties.
\end{proof}

\begin{figure}
\begin{center}
\includegraphics{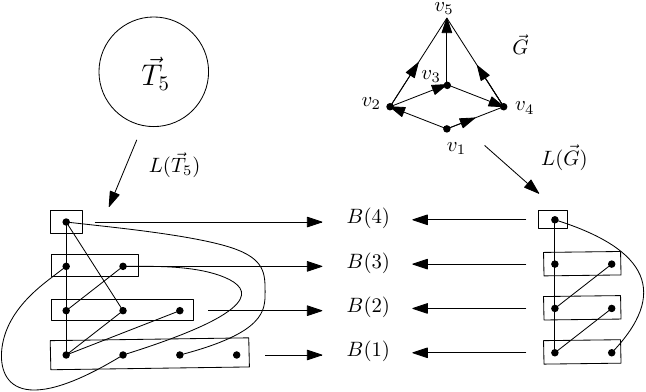}
\end{center}
\caption{The graph $L(\vec{T}_5)$ is shown in the left and in the right the graph $L(\vec{G})$ is shown for the directed acyclic graph $\vec{G}$(top right), with $\{v_1, v_2, v_3, v_4, v_5\}$ representing a topological ordering of the vertices of $\vec{G}$}.

\label{fig:shift of graph}
\end{figure}

We now note another useful observation.
\begin{observation}\label{L(G) acyclic}
   Let $\vec{G}$ be a directed acyclic graph. Then $\vec{L}(\vec{G})$ is also acyclic.
\end{observation}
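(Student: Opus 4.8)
The plan is to certify acyclicity of $\vec{L}(\vec{G})$ by exhibiting a real-valued potential that strictly increases along every arc, using the index function $I$ already introduced. First I would fix a topological ordering $\sigma=\{v_1,v_2,\ldots,v_n\}$ of $\vec{G}$ (which exists precisely because $\vec{G}$ is acyclic), form the associated bag decomposition $\B_{\sigma}$ of $L(\vec{G})$, and recall the index function $I$ with $I(u)=i$ iff $u\in B(i)$. The key input is Observation~\ref{structure of S(G)}(ii): whenever two vertices $u_1,u_2$ of $L(\vec{G})$ are adjacent, the arc between them in $\vec{L}(\vec{G})$ is directed from $u_1$ to $u_2$ if and only if $I(u_1)<I(u_2)$. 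In other words, $I$ is strictly increasing along every arc of $\vec{L}(\vec{G})$.

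Given this, the conclusion is immediate: suppose for contradiction that $\vec{L}(\vec{G})$ contains a directed cycle $u_1\to u_2\to\cdots\to u_k\to u_1$. Applying Observation~\ref{structure of S(G)}(ii) to each of the $k$ arcs yields $I(u_1)<I(u_2)<\cdots<I(u_k)<I(u_1)$, which is absurd. Hence $\vec{L}(\vec{G})$ has no directed cycle and is therefore acyclic. Equivalently, ordering the vertices of $L(\vec{G})$ primarily by increasing $I$-value and breaking ties arbitrarily within each bag produces a topological ordering of $\vec{L}(\vec{G})$, which is another way to phrase the same argument.

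As an alternative route that does not invoke Observation~\ref{structure of S(G)}(ii), I could argue directly from the definition of the line digraph: a directed walk $e_1\to e_2\to\cdots\to e_{k+1}$ in $\vec{L}(\vec{G})$ corresponds to a sequence of arcs $e_j=a_{j-1}a_j$ of $\vec{G}$, and hence to a directed walk $a_0\to a_1\to\cdots\to a_k$ in $\vec{G}$; if this were a closed walk in $\vec{L}(\vec{G})$ then $a_k=a_0$, giving a directed closed walk of positive length in $\vec{G}$ and thus a directed cycle in $\vec{G}$, contradicting acyclicity. I do not anticipate a genuine obstacle in either version; the only point requiring care is to quote the orientation convention in Observation~\ref{structure of S(G)}(ii) correctly and to note that it applies to \emph{every} arc of $\vec{L}(\vec{G})$, after which the contradiction closes the argument.
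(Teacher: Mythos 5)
Your argument is correct and is essentially the paper's own proof: the paper likewise sorts the vertices of $L(\vec{G})$ by nondecreasing $I$-value and invokes Observation~\ref{structure of S(G)}(ii) to conclude this is a topological ordering, which is exactly your ``potential strictly increasing along every arc'' phrasing. Your alternative route via lifting a closed walk in $\vec{L}(\vec{G})$ back to a closed directed walk in $\vec{G}$ is also valid, but the main argument matches the paper.
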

\begin{proof}
    Let $\sigma'=\{u_1, u_2, \ldots, u_m\}$ be an ordering of the vertices of $\vec{L}(\vec{G})$ such that for all $i<j$, $I(u_i) \leq I(u_j)$. By Observation~\ref{structure of S(G)}(ii), the ordering $\sigma'$ is a topological ordering of the vertices of $\vec{L}(\vec{G})$. Hence $\vec{L}(\vec{G})$ is acyclic.
\end{proof}
 
Given a directed acyclic graph $\vec{G}$, Observation~\ref{L(G) acyclic} allows us to define \emph{iterated line digraphs} as follows. We define $\vec{L}^g(\vec{G})=\vec{L}(\vec{L}^{g-1}(\vec{G}))$, with $\vec{L}^1(\vec{G})=L(\vec{G})$ and $\vec{L}^0(\vec{G})=\vec{G}$. Also let $L^g(\vec{G})$ denote the underlying undirected graph of $\vec{L}^g(\vec{G})$.


\section{Some standard properties of $L(\vec{G})$} \label{subsec:Chromatic number}

In this section, we discuss some fundamental properties of the graph $L(\vec{G})$, which are inherited from the directed acyclic graph $\vec{G}$. Most results in this section are either present in literature~\cite{DBLP:books/daglib/0013017} or are very similar to the corresponding theorems about shift graphs, but we still prove them for the sake of completeness.

The lemma below bounds $\chi(L(\vec{G}))$ in terms of $\chi(G)$, where $\vec{L}(\vec{G})$ is the line digraph of $\vec{G}$ ($\vec{G}$ need not be acyclic here). The following lemma is proved in~\cite{DBLP:books/daglib/0013017}.

\begin{lemma}[\cite{DBLP:books/daglib/0013017}] {\label{chromatic number of S(G)}}
   Let $k^*=\min_{k \in \mathbb{N}}: \chi(G) \leq {k \choose \floor{\frac{k}{2}}}$. Then $ \log_{2}(\chi(G)) \leq \chi(L(\vec{G})) \leq k^*=\mathcal{O}(\log(\chi(G)))$.
\end{lemma}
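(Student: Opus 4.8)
## Proof Plan

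The plan is to establish the two inequalities separately. For the lower bound $\log_2(\chi(G)) \leq \chi(L(\vec{G}))$, I would argue by contradiction on the colorability of $G$. Suppose $L(\vec{G})$ has a proper coloring $c$ using colors $\{1, 2, \ldots, t\}$. The key idea is that the arcs out of a single vertex $v$ of $\vec{G}$ form an independent set in $L(\vec{G})$ (this is Observation~\ref{structure of S(G)}(i)), so they receive distinct colors under $c$. I would then color each vertex $v$ of $G$ by the \emph{set} $S(v) \subseteq \{1, \ldots, t\}$ of colors appearing on its outgoing arcs in $\vec{G}$. Wait — a cleaner variant used in the literature: orient nothing extra, but for an edge $uv$ of $G$ directed as $uv$ in $\vec{G}$, the arc $uv$ is a vertex of $L(\vec{G})$; I claim $S(u) \neq S(v)$ whenever $uv$ is an arc. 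Indeed the color $c(uv)$ lies in $S(u)$ (it is an out-arc of $u$), and I must show it is not in $S(v)$: any out-arc $vw$ of $v$ satisfies, in $\vec{L}(\vec{G})$, that there is an arc from $uv$ to $vw$, hence $uv$ and $vw$ are adjacent in $L(\vec{G})$, hence $c(uv) \neq c(vw)$. So $c(uv) \in S(u) \setminus S(v)$, giving $S(u) \neq S(v)$. Thus $v \mapsto S(v)$ is a proper coloring of $G$ by subsets of a $t$-element set, so $\chi(G) \leq 2^t$, i.e. $t \geq \log_2 \chi(G)$; since $t$ was an arbitrary number of colors for $L(\vec{G})$, $\chi(L(\vec{G})) \geq \log_2 \chi(G)$.

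For the upper bound, I would reverse the construction: given a proper coloring of $G$ by $\chi(G)$ colors, I want to color the arcs of $\vec{G}$. Using $k^* = k$ colors on the arcs, I want to assign to each arc $uv$ a color in $\{1, \ldots, k\}$ such that adjacent arcs (in $L(\vec{G})$) get different colors. The standard trick is to use the coloring of $G$ to assign to each vertex $v$ a distinct subset $A(v) \subseteq \{1, \ldots, k\}$ of size exactly $\lfloor k/2 \rfloor$; this is possible precisely because $\chi(G) \leq \binom{k}{\lfloor k/2 \rfloor}$ and we only need distinctness of $A(\cdot)$ on color classes, hence on adjacent vertices. Now for an arc $uv$ in $\vec{G}$, since $|A(u)| = |A(v)| = \lfloor k/2\rfloor$ and $A(u) \neq A(v)$, neither is contained in the other, so I can pick a color $\phi(uv) \in A(v) \setminus A(u)$. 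I then need to check this is a proper coloring of $L(\vec{G})$: if $uv$ and $vw$ are adjacent in $L(\vec{G})$ with an arc $uv \to vw$ in $\vec{L}(\vec{G})$, then $\phi(uv) \in A(v) \setminus A(u)$ while $\phi(vw) \in A(w) \setminus A(v)$, so $\phi(uv) \in A(v)$ but $\phi(vw) \notin A(v)$, hence $\phi(uv) \neq \phi(vw)$. This shows $\chi(L(\vec{G})) \leq k^*$.

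The remaining point is the estimate $k^* = \mathcal{O}(\log \chi(G))$, which follows from the asymptotics $\binom{k}{\lfloor k/2 \rfloor} = \Theta(2^k / \sqrt{k})$: to make this at least $\chi(G)$ it suffices to take $k = \lceil \log_2 \chi(G) \rceil + O(1)$ (indeed $k = \lceil \log_2 \chi(G)\rceil + 1$ already works for all but tiny cases, and one can absorb the rest into the constant), so $k^* \leq \log_2 \chi(G) + O(1) = \mathcal{O}(\log \chi(G))$.

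I do not anticipate a serious obstacle here; the only place requiring care is the bookkeeping about which direction the arc $uv \to vw$ runs in $\vec{L}(\vec{G})$ versus which vertex's color-set the chosen color is "taken from," so that the two inclusions $\phi(uv) \in A(v)$ and $\phi(vw) \notin A(v)$ genuinely pin down a common set $A(v)$ and force the colors apart. Getting the convention consistent (choosing $\phi(uv)$ from $A(\text{head}) \setminus A(\text{tail})$, i.e. from the set of the later endpoint) is what makes the proper-coloring check go through cleanly in both directions of adjacency.
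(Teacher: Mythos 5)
Your proof is correct and is essentially the standard argument for this classical lemma (the paper itself does not reprove it but cites it to the literature, where exactly this set-coloring/antichain argument appears): the lower bound via coloring each vertex of $G$ by the set of colors on its out-arcs, and the upper bound via assigning distinct $\lfloor k/2\rfloor$-subsets to color classes and picking $\phi(uv)\in A(v)\setminus A(u)$. One minor slip: since $\binom{k}{\lfloor k/2\rfloor}=\Theta(2^k/\sqrt{k})$, the choice $k=\lceil\log_2\chi(G)\rceil+1$ does \emph{not} suffice for large $\chi(G)$ (you need an extra additive $O(\log\log\chi(G))$ term, or simply take $k=2\lceil\log_2\chi(G)\rceil$), but this does not affect the stated conclusion $k^*=\mathcal{O}(\log\chi(G))$.
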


\begin{corollary} \label{chromatic iteration}
    Let $\vec{G}$ be an acyclic directed graph. Then for any $g\in \mathbb{N}$, we have that $\chi(L^g(\vec{G})) \geq \log_{2}\log
    _{2}\cdots\log_{2} (\chi(G))$, where the logarithm is taken $g$ times.
\end{corollary}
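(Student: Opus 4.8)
The statement is Corollary~\ref{chromatic iteration}, which asserts that iterating the line-digraph operation $g$ times drives the chromatic number down only like a $g$-fold iterated logarithm; equivalently, the chromatic number cannot drop faster than that. The natural approach is a straightforward induction on $g$, using the lower bound half of Lemma~\ref{chromatic number of S(G)} as the engine. The base case $g=0$ is trivial since $L^0(\vec{G})=\vec{G}$ and $\chi(G)\geq\chi(G)$ (the empty iterated logarithm is the identity).

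For the inductive step, I would assume the claim holds for $g-1$ and argue about $g$. Write $\vec{H}=\vec{L}(\vec{G})$; by Observation~\ref{L(G) acyclic}, $\vec{H}$ is again acyclic, so $L^{g-1}(\vec{H})=L^{g}(\vec{G})$ is a legitimate $(g-1)$-fold iteration applied to the acyclic digraph $\vec{H}$, and the induction hypothesis gives $\chi(L^{g-1}(\vec{H}))\geq \underbrace{\log_2\log_2\cdots\log_2}_{g-1}(\chi(H))$. Now apply the lower bound of Lemma~\ref{chromatic number of S(G)} to $\vec{G}$: since $H=L(\vec{G})$, we have $\chi(H)=\chi(L(\vec{G}))\geq \log_2(\chi(G))$. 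Since each $\log_2$ is monotone increasing on the relevant range, substituting this into the displayed inequality yields $\chi(L^{g}(\vec{G}))=\chi(L^{g-1}(\vec{H}))\geq \underbrace{\log_2\cdots\log_2}_{g-1}(\log_2(\chi(G)))=\underbrace{\log_2\cdots\log_2}_{g}(\chi(G))$, which closes the induction.

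The main technical point to be careful about — and the only real obstacle — is well-definedness and monotonicity: the iterated logarithm is only meaningful when all intermediate values stay above the threshold where $\log_2$ is positive (and, strictly, above $1$ so the next logarithm is defined), and one needs $\log_2$ to be order-preserving so that the substitution $\chi(H)\geq\log_2\chi(G)$ can be pushed through the outer $g-1$ logarithms. I would either state the result under the implicit convention that $\chi(G)$ is large enough (as the paper does elsewhere, since we only care about asymptotics / "arbitrarily large" chromatic number), or note that for small values the inequality is vacuous or trivially true because the right-hand side is at most $1$. A secondary bookkeeping issue is making sure that Lemma~\ref{chromatic number of S(G)} applies at each stage — it is stated for general $\vec{G}$ (not necessarily acyclic) for the lower bound, so there is in fact nothing to check there; acyclicity is only needed to make the iteration $L^g$ well-defined, which is exactly what Observation~\ref{L(G) acyclic} supplies.
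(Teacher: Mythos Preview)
Your proposal is correct and follows essentially the same approach as the paper: the paper's proof is the single sentence ``Using the lower bound in Lemma~\ref{chromatic number of S(G)} iteratively $g$ times we get our result,'' and your induction on $g$ together with Observation~\ref{L(G) acyclic} is exactly a fleshed-out version of that iteration. Your additional remarks on monotonicity and well-definedness of the iterated logarithm are appropriate caveats that the paper leaves implicit.
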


\begin{proof}
  Using the lower bound in Lemma \ref{chromatic number of S(G)}  iteratively $g$ times we get our result.
\end{proof}

It is well known that the odd-girth of the generalized shift graph $G_{n,k}=L^{k-1}(\vec{T}_n)$ is at least $2k-1$, where $\vec{T}_n$ is the complete acyclic graph. It is actually because of a more general result that for any directed acyclic graph $\vec{G}$, the odd-girth of $L(\vec{G})$ is strictly bigger than the undirected odd-girth of $\vec{G}$, which we prove now. We give some standard definitions that are needed for our proof.

A \emph{walk} $W$ on a graph $G$ is a sequence $v_1,e_1,v_2,e_2,...,e_{m-1},v_m$ that alternates between vertices and edges of $G$, beginning and ending with a vertex and each edge $e_j$ of $W$ is incident with the vertex $v_j$ and the vertex $v_{j+1}$. In simple graphs only specifying the vertices is enough as the edges between two vertices are unique. A closed walk is a walk that starts and ends in the same vertex. Also, the length of a walk is the number of edges in it.
 Before we proceed to our theorem, we state a standard result below. For the sake of completeness, we give the proof.

\begin{observation} \label{odd cycle}
  In any simple undirected graph $G$, every closed walk with an odd length contains an odd cycle.
\end{observation}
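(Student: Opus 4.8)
The plan is to prove this by strong induction on the length $\ell$ of the closed walk. The base case is $\ell = 3$, where an odd closed walk of length $3$ in a simple graph is itself a triangle (the three vertices must be distinct, since a repeated vertex would force a repeated edge or a walk shorter than $3$), so it contains an odd cycle trivially. For the inductive step, suppose the claim holds for all odd closed walks of length less than $\ell$, and let $W = v_1, v_2, \ldots, v_\ell, v_1$ be a closed walk of odd length $\ell \geq 5$.

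The first case to handle is when all the vertices $v_1, \ldots, v_\ell$ are distinct: then $W$ is already an odd cycle and we are done. Otherwise, some vertex repeats, say $v_i = v_j$ with $1 \leq i < j \leq \ell$ and $(i,j) \neq (1, \ell)$ in the cyclic sense (one should be slightly careful phrasing the indices so that the closed walk is viewed cyclically; the cleanest way is to say there exist indices $i < j$ with $v_i = v_j$ and $1 \le j - i < \ell$). This repeated vertex splits $W$ into two shorter closed walks: one is $v_i, v_{i+1}, \ldots, v_j (= v_i)$ of length $j - i$, and the other is $v_j, v_{j+1}, \ldots, v_\ell, v_1, \ldots, v_i (= v_j)$ of length $\ell - (j-i)$. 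Since $\ell$ is odd, exactly one of these two lengths is odd. That shorter closed walk has odd length strictly less than $\ell$ and at least $1$; if its length is $1$ it would be a loop, impossible in a simple graph, and length $2$ is even, so in fact it has odd length at least $3$. By the induction hypothesis it contains an odd cycle, which is then an odd cycle in $G$, completing the induction.

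The only mild subtlety — and the step I would be most careful about — is the bookkeeping of indices when a vertex repeats, in particular making sure that the two sub-closed-walks are genuinely shorter (length at least $1$ and at most $\ell - 1$ each) and that we correctly conclude the shorter odd one has length at least $3$ using simplicity of $G$ to rule out length $1$. Everything else is a routine parity argument. No properties of $L(\vec{G})$ are needed here; this is a purely combinatorial lemma about walks in simple graphs.
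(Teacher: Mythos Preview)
Your proof is correct and follows essentially the same approach as the paper: induction on the length of the closed walk, with the base case $\ell=3$ being a triangle, and the inductive step splitting the walk at a repeated vertex into two shorter closed walks, one of which must have odd length. Your handling of the index bookkeeping and the use of simplicity to rule out length-$1$ sub-walks is, if anything, slightly more careful than the paper's own write-up.
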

\begin{proof}
    We proceed by induction on the length of the walk. Trivially, in a simple graph, there cannot be a closed odd walk of length $1$. So we have our base case when the closed walk $v_1, e_1, v_2, e_2, v_3, e_3, v_1$ has length $3$. Since $G$ is simple, $v_2 \neq v_1$ and $v_3\neq v_2 $ and $v_3\neq v_1$. So the walk is an odd cycle of length $3$.

Now, suppose any closed odd walk of length $2g-1$ has an odd cycle. Let $$W=v_1, e_1, v_2, e_2, v_3, e_3, \ldots v_{2g+1}, e_{2g+1}, v_1$$ be a closed walk of length $2g+1$. If all the vertices $v_i$, $1 \leq i \leq 2g+1$ are distinct then $W$ is an odd cycle and we are done. Now suppose that $W$ has two repeated vertex (say $v_i=v_j$, $i<j$). Then clearly they cannot be consecutive as $G$ is simple. So we can divide the walk $W$ into two distinct closed walks $W_1$, $W_2$ both starting with $v_i$ and ending in $v_j$ where $W_1=v_i, v_{i+1}, \ldots, v_j$ and $W_2=v_i, v_{i-1}, \ldots , v_1, v_{2g+1}, \ldots, v_j$. Since each of the walks has a size greater than or equal to $2$, each walk will have a size less than or equal to $2g-1$, and trivially one of them must be an odd walk. Hence, by induction, we are done.
\end{proof}

\begin{lemma} \label{odd girth}
    If a directed acyclic graph $\vec{G}$ has an undirected odd-girth equal to $2g-1$, then the odd-girth of $L(\vec{G})$ is greater than or equal to $2g+1$.
\end{lemma}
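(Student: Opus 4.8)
The plan is to show that any odd closed walk in $L(\vec G)$ "projects" to an odd closed walk in $G$, and that the projection is strictly shorter — which, combined with Observation~\ref{odd cycle}, forces the odd-girth of $L(\vec G)$ to exceed that of $G$. Concretely, fix a topological ordering $\sigma = \{v_1,\dots,v_n\}$ of $\vec G$ and use the index function $I$ from the bag decomposition of $L(\vec G)$. Given an odd cycle $C = u_0, u_1, \dots, u_{2k} , u_0$ in $L(\vec G)$ of length $2g-1$ or more, consider the sequence of indices $I(u_0), I(u_1), \dots, I(u_{2k}), I(u_0)$. By Observation~\ref{structure of S(G)}(ii), consecutive $u_i, u_{i+1}$ being adjacent in $L(\vec G)$ implies $v_{I(u_i)}$ and $v_{I(u_{i+1})}$ are adjacent in $G$ whenever $I(u_i)\neq I(u_{i+1})$; when $I(u_i)=I(u_{i+1})$ the two vertices collapse to a single vertex of $G$ (note adjacent vertices of $L(\vec G)$ may lie in the same bag only in the degenerate... actually by Observation~\ref{structure of S(G)}(i) adjacent vertices are in \emph{different} bags, so in fact $I(u_i)\neq I(u_{i+1})$ always). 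So the sequence $v_{I(u_0)}, v_{I(u_1)}, \dots, v_{I(u_{2k})}, v_{I(u_0)}$ is a closed walk in $G$ of length exactly $2g-1$ (or more).

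The crux is then to show this projected closed walk must actually be \emph{strictly shorter as an odd structure}, i.e. that it cannot itself be an odd cycle of the same length — equivalently, that it contains a repeated vertex, so by Observation~\ref{odd cycle} the odd cycle hiding inside it has length at most $2g-3$, contradicting the odd-girth hypothesis on $G$ if the original cycle had length $2g-1$. The key structural fact I would isolate: as we traverse the cycle $C$, the index sequence goes up and down, and at a local minimum — an index $i$ with $I(u_{i-1}) > I(u_i)$ and $I(u_{i+1}) > I(u_i)$ — Observation~\ref{structure of S(G)}(v) tells us that $u_{i-1}$ and $u_{i+1}$ are both adjacent to every vertex of $B(I(u_i))$, and are themselves non-adjacent. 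Dually, at a local maximum the incident indices are equal by Observation~\ref{structure of S(G)}(iii), so $I(u_{i-1}) = I(u_{i+1})$, meaning the projected walk revisits a vertex of $G$. Since a closed walk on the cyclic index sequence that is not constant must have at least one local maximum, the projected closed walk in $G$ has a repeated vertex $v_{I(u_{i-1})} = v_{I(u_{i+1})}$.

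So the argument runs: the projected sequence is a closed walk in $G$ of odd length $\ell$ (the length of the chosen shortest odd cycle in $L(\vec G)$), it is not a cycle because it repeats a vertex, hence by Observation~\ref{odd cycle} it splits into two shorter closed walks one of which is odd, and that odd closed walk has length at most $\ell - 2$; applying Observation~\ref{odd cycle} again it contains an odd cycle of $G$, so the odd-girth of $G$ is at most $\ell - 2$. Since the odd-girth of $G$ equals $2g-1$, we get $\ell - 2 \geq 2g-1$, i.e. $\ell \geq 2g+1$, which is exactly the claimed bound on the odd-girth of $L(\vec G)$.

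The step I expect to be the main obstacle is the careful case analysis at a local maximum of the index sequence to conclude $I(u_{i-1}) = I(u_{i+1})$ from Observation~\ref{structure of S(G)}(iii) — one must check that $u_{i-1} \neq u_{i+1}$ is still compatible (the two arcs are distinct but share the property of being adjacent to $u_i$ with larger index), and that this genuinely yields a repeated \emph{vertex} of the projected walk rather than merely a repeated index at non-consecutive positions that could already have coincided trivially. I would also want to rule out the edge case where the index sequence is constant, but that is impossible since adjacent vertices of $L(\vec G)$ lie in distinct bags by Observation~\ref{structure of S(G)}(i).
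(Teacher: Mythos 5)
Your overall strategy is exactly the paper's: project the odd cycle of $L(\vec G)$ to a closed walk in $G$ via the index function, show the projected walk repeats a vertex by looking at an extremum of the index sequence, and then invoke Observation~\ref{odd cycle} to extract an odd cycle of $G$ of length at most $\ell-2$, forcing $\ell\geq 2g+1$. The arithmetic at the end is right, and so is the remark that adjacent vertices of $L(\vec G)$ lie in distinct bags.

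However, you have swapped the roles of local minima and local maxima, and as written the step that produces the repeated vertex is false. Observation~\ref{structure of S(G)}(iii) applies when the \emph{middle} vertex has the \emph{smallest} index among the three (both neighbours' indices are $\geq I(u_i)$), i.e.\ at a local \emph{minimum} of the index sequence; there it yields $I(u_{i-1})=I(u_{i+1})$, because $u_{i-1}$ and $u_{i+1}$ are both arcs emanating from the head of the arc $u_i$. Observation~\ref{structure of S(G)}(v) is the one that applies at a local \emph{maximum}, and it gives only that $u_{i-1}$ and $u_{i+1}$ are non-adjacent and dominate the bag $B(I(u_i))$ --- it does not give equality of their indices, and indeed at a local maximum the two neighbours' indices can genuinely differ (e.g.\ in $L(\vec T_4)$ the arc $(3,4)$ has neighbours $(1,3)$ and $(2,3)$ with indices $1$ and $2$). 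So the sentence ``at a local maximum the incident indices are equal by Observation~(iii)'' is wrong on both counts. The fix is immediate and is what the paper does: take the vertex of the cycle with the globally minimum index (which is in particular a local minimum), apply Observation~\ref{structure of S(G)}(iii) to its two cycle-neighbours to get the repeated vertex of the projected walk, and then conclude as you do. With that correction your argument coincides with the paper's proof.
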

\begin{proof}
     Let  $\sigma=\{v_1,v_2, \ldots, v_n\}$ be a topological ordering of $\vec{G}$. Let $\B_{\sigma}  =\{B(i): i=1, 2, \ldots , n-1\}$ be the bag decomposition of $L(\vec{G})$.

Suppose for a contradiction, $C=\{u_1, u_2, \ldots, u_{m}\}$ be an odd cycle of $L(\vec{G})$ with $m \leq 2g-1$. 
By Observation~\ref{structure of S(G)}(ii), we know that $W= v_{I(u_1)}, v_{I(u_2)}, v_{I(u_3)}, \ldots , v_{I(u_m)}, v_{I(u_1)}$ forms a closed walk in $G$. We will show that the indices $I(u_{j})$ for $j=1, 2, \ldots, m$ cannot be all distinct.
 Let $I(u_{j^*})= \min_{1 \leq j \leq m}I(u_j)$. Let $u_{j^*_1}$ and $u_{j^*_2}$ be two neighbour of $u_{j^*}$ in the cycle $C$. Since $I(u_{j^*})$ is minimum among the index values of the vertices in the cycle $C$, by Observation~\ref{structure of S(G)}(iii), we have $I(u_{j^*_1})=I(u_{j^*_2})$. Hence $W$ contains repeated vertices and therefore is not a cycle in $G$. Moreover, by Observation~\ref{odd cycle}, we know that $G$ has an odd cycle consisting of vertices from $W$, so the length of that odd cycle must be strictly less than $2g-1$ which is a contradiction because the odd girth of $G$ is $2g-1$. Hence, any odd cycle of $L(\vec{G})$ must have a length greater than or equal to $2g+1$.  \end{proof}

\begin{corollary}\label{girth iteration}
    For any directed acyclic graph $\vec{G}$, the odd-girth of the graph $L^g(\vec{G})$ is greater than or equal to $2g+1$.
\end{corollary}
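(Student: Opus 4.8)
The plan is to prove Corollary~\ref{girth iteration} by induction on $g$, using Lemma~\ref{odd girth} as the inductive step. First I would set up the base case $g=0$: the graph $L^0(\vec{G}) = \vec{G}$, and the statement asserts that its odd-girth is at least $2\cdot 0 + 1 = 1$, which holds trivially since every odd cycle has length at least $1$ (indeed at least $3$ in a simple graph), and if $\vec{G}$ has no odd cycle the odd-girth convention still makes the inequality vacuous. One might instead prefer $g=1$ as the base case, where the claim is that $L(\vec{G})$ has odd-girth at least $3$, which is immediate since $L(\vec{G})$ is a simple graph.

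For the inductive step, assume the statement holds for some $g \geq 0$, i.e.\ the odd-girth of $L^g(\vec{G})$ is at least $2g+1$. I want to conclude that the odd-girth of $L^{g+1}(\vec{G}) = L(\vec{L}^g(\vec{G}))$ is at least $2g+3$. The key point is that $\vec{L}^g(\vec{G})$ is a directed acyclic graph: this is guaranteed by iterating Observation~\ref{L(G) acyclic}, or more precisely is exactly the content of the definition of iterated line digraphs given just after that observation. So Lemma~\ref{odd girth} applies to $\vec{L}^g(\vec{G})$. If the undirected odd-girth of $\vec{L}^g(\vec{G})$ equals some value $2g'-1 \geq 2g+1$ (so $g' \geq g+1$), then Lemma~\ref{odd girth} gives that the odd-girth of $L(\vec{L}^g(\vec{G}))$ is at least $2g'+1 \geq 2g+3$, which is what we need. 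A small care point is the case where $\vec{L}^g(\vec{G})$ has no odd cycle at all, so its odd-girth is not of the form $2g'-1$; in that case I would argue directly (or note that Lemma~\ref{odd girth}'s proof shows any odd cycle of $L(\vec{H})$ of length $m$ forces, via Observation~\ref{structure of S(G)}(iii) and Observation~\ref{odd cycle}, an odd closed walk in $H$ whose odd cycle has length strictly less than $m$, so if $H$ has no odd cycle then $L(\vec{H})$ has none either).

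I do not expect any serious obstacle here: the corollary is a routine induction once Lemma~\ref{odd girth} and the acyclicity of iterated line digraphs (Observation~\ref{L(G) acyclic}) are in hand. The only thing to be slightly careful about is bookkeeping around the parity indexing ($2g-1$ versus $2g+1$) and handling the degenerate no-odd-cycle case cleanly; I would state the induction hypothesis in the form ``every odd cycle of $L^g(\vec{G})$ has length $\geq 2g+1$'' so that the vacuous case is subsumed automatically. The write-up would be just a few lines: establish the base case, invoke the definition of iterated line digraphs for acyclicity, apply Lemma~\ref{odd girth}, and chain the inequality.
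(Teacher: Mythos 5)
Your proposal is correct and matches the paper's approach exactly: the paper's proof is simply ``follows from repeated application of Lemma~\ref{odd girth}.'' Your write-up is more careful than the paper's one-liner (handling the base case and the degenerate no-odd-cycle situation explicitly), but it is the same induction.
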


\begin{proof}
Trivially follows from repeated application of Lemma~\ref{odd girth}.
\end{proof}






\section{Chromatic number of $K_{a,b}$-free induced subgraph of the shift graphs}\label{subsec: K_{a,b} hole Chromatic number}

Girão~\etal~\cite{Girão2023}, proved the following theorem.
\begin{theorem}[\cite{Girão2023}]\label{girao}
     For every graph $F$ with at least one edge, there is a constant $c_F$ such that there are graphs of arbitrarily large chromatic number and the same clique number as $F$, in which every $F$-free induced subgraph has chromatic number at most $c_F$.
\end{theorem}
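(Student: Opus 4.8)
The plan is to construct, for a fixed graph $F$ with at least one edge and an arbitrary target $m$, a graph $G=G(F,m)$ with $\chi(G)\geq m$ and $\omega(G)=\omega(F)$ in which every induced subgraph $H$ with $\chi(H)>c_F$ contains an induced copy of $F$, where $c_F$ depends on $F$ only; and to take $G$ to be a controlled blow-up of a generalized shift graph $G_{n,k+1}=L^{k}(\vec{T}_n)$ (Observation~\ref{the shift graph}) with $k=k(F)$ fixed and $n$ enormous, reading off induced copies of $F$ from the rigid combinatorics of $L(\vec{G})$ recorded in Observation~\ref{structure of S(G)}.

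\emph{The case $\omega(F)=2$.} Here I would try $G=G_{n,k+1}$ itself. It is triangle-free --- as noted in the introduction, the shift graphs $G_{n,j}$ with $j\geq 2$ have odd-girth at least $5$ --- so $\omega(G)=2=\omega(F)$, and by Corollary~\ref{chromatic iteration} we have $\chi(G_{n,k+1})\geq\log_2\cdots\log_2 n$ ($k$ iterated logarithms), which for fixed $k$ tends to infinity with $n$. The substantive claim is: there are $k=k(F)$ and $c_F$ such that every induced $H\subseteq G_{n,k+1}$ with $\chi(H)>c_F$ contains an induced $F$. To prove it I would use the bag decomposition: a vertex of $L(\vec{G})$ is an arc of $\vec{G}$, each bag is independent, and by Observation~\ref{structure of S(G)}(ii),(iii),(v) whether two vertices of $L(\vec{G})$ are adjacent --- together with the orientation of that edge in $\vec{L}(\vec{G})$ --- is a predetermined function of their bags. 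Any induced $H$ is of the form $L(\vec{H})$ for an acyclically oriented subgraph $\vec{H}$ one level down (cf.\ Observation~\ref{the induced shift graph}), and by Lemma~\ref{chromatic number of S(G)} large $\chi(H)$ forces $\chi$ of the underlying graph of $\vec{H}$ to be exponentially larger; peeling $k$ times this way, at a level that still carries enormous chromatic number one finds many pairwise ``parallel'' branches, and an iterated Ramsey argument --- colour tuples of branches by the (fixed) adjacency pattern they induce --- selects $|V(F)|$ vertices of $H$ whose induced subgraph is exactly $F$. The depth of the recursion and the resulting tower of Ramsey numbers determine $k(F)$ and $c_F$, which depend on $F$ only; this is the whole point of the theorem.

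\emph{The case $\omega(F)\geq 2$ in general.} To raise the clique number to $\omega:=\omega(F)$ while keeping $\chi\to\infty$, substitute a clique into each vertex of $G_{n,k+1}$: use $G_{n,k+1}[K_{\omega/2}]$ when $\omega$ is even (clique number $2\cdot(\omega/2)=\omega$), and $G_{n,k+1}[K_{(\omega-1)/2}]$ joined with one additional vertex when $\omega$ is odd (clique number $(\omega-1)+1=\omega$). Blowing up a triangle-free graph by cliques of bounded size changes its chromatic number by at most a bounded multiplicative factor, so arbitrarily large $\chi$ survives. For the induced-$F$ part, fix a maximum clique $Q$ of $F$: a high-chromatic induced subgraph of the blow-up projects onto a high-chromatic induced subgraph of $G_{n,k+1}$ carrying clique ``fibres'' of bounded size; the case $\omega(F)=2$ supplies an induced copy of the triangle-free skeleton $F-Q$ inside that projection, positioned compatibly with a chosen location for $Q$, and the fibres (plus the extra universal vertex when $\omega$ is odd) realize the edges inside $Q$. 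Verifying that $F$ decomposes usefully around $Q$ and that every required (non-)adjacency comes out induced is routine but requires care.

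\emph{Where the difficulty lies.} The genuine obstacle is the case $\omega(F)=2$: converting ``$\chi$ is large'' into a substructure that is forced to contain a prescribed $F$ as an \emph{induced} subgraph, while keeping the final bound $c_F$ independent of $n$ (hence of $\chi(G)$). This is exactly where the determinism of adjacency in $L(\vec{G})$ (Observation~\ref{structure of S(G)}) must do the work and where the Ramsey bookkeeping lives. A secondary tension is that $L^{k}(\vec{T}_n)$ has odd-girth at least $2k+1$ (Corollary~\ref{girth iteration}), so when $F$ contains short odd cycles $k$ cannot be taken large; the resolution is that a \emph{fixed} $k=k(F)$ already suffices, since the unbounded chromatic number is harvested from $n\to\infty$ and not from iterating the line-digraph operation.
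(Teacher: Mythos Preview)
The paper does not prove Theorem~\ref{girao}. It is quoted verbatim from Gir\~ao~\etal~\cite{Girão2023} as background, and the paper's contribution (Theorem~\ref{Complete bipartite free induced}) is to sharpen the constant $c_F$ in the special case $F=K_{a,b}$. So there is no ``paper's own proof'' to compare your attempt to. For context, the paper does remark (Section~\ref{subsec:AOP}) that the constructions in \cite{Davies23,CARBONERO202363,Girão2023} all rest on graphs with the AOP property, and it proves that $G_{n,2}$ itself does \emph{not} have that property for $n\geq 9$; so your shift-graph strategy is genuinely different from what Gir\~ao~\etal\ actually do.

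On its own merits, your sketch has two concrete gaps. First, in the $\omega(F)=2$ case the sentence ``an iterated Ramsey argument --- colour tuples of branches by the (fixed) adjacency pattern they induce --- selects $|V(F)|$ vertices of $H$ whose induced subgraph is exactly $F$'' is not yet an argument: you have not said what the ``branches'' are, what is being coloured, or why a monochromatic class yields precisely $F$ rather than some other pattern. Observation~\ref{structure of S(G)} tells you that adjacency in $L(\vec G)$ is determined by bag indices, but that alone does not manufacture an induced copy of an arbitrary triangle-free $F$; you would still need a structural lemma of the shape ``high-$\chi$ induced subgraphs of $G_{n,k+1}$ contain every fixed triangle-free graph on $\leq t$ vertices with odd-girth $\geq 2k+3$ as an induced subgraph'', and nothing in your outline or in this paper supplies that.

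Second, the reduction for $\omega(F)\geq 3$ does not work as written. You take a maximum clique $Q$ of $F$ and appeal to the $\omega=2$ case for the ``triangle-free skeleton $F-Q$'', but $F-Q$ need not be triangle-free (e.g.\ $F=K_4\sqcup K_3$, $Q=K_4$), so the recursion does not bottom out where you claim. Even when it does, lifting an induced copy of $F-Q$ in the base graph to an induced copy of $F$ in the blow-up $G_{n,k+1}[K_{\lfloor\omega/2\rfloor}]$ requires controlling the edges between $Q$ and $F-Q$, which is not automatic. These are the places where a real proof must do work, and your sketch currently glosses over them.
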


In this section, we consider the case when $F$ is a complete bipartite graph. Note that the odd cycle of the minimum length in a triangle-free graph with chromatic number greater than or equal to $3$ must trivially be an induced odd cycle. Now any odd cycle has chromatic number $3$ and does not have an induced $K_{a,b}$ for $a,b \geq 2$. Hence, the constant $c_F$ in Theorem~\ref{girao} is trivially lower bounded by $3$ for $F=K_{a,b}$ for $a,b \geq 2$. This disproves a conjecture of Nešetřil in~\cite{Girão2023} which states that the constant $c_F$ is equal to $\chi(F)$, when $F$ is not vertex critical. The current known upper bound for the constant $c_F$ in Theorem~\ref{girao} for any graph $F$ is $O(V(F)^9)$. Here we improve this bound for $F=K_{a,b}$ and show that the constant $c_F$ in Theorem~\ref{girao} is upper bounded by $O(\log(V(F)))$ in this case.
More specifically, we prove that for any $n\in \mathbb{N}$, the chromatic number of any induced subgraph of the triangle-free shift graph $G_{n,2}$ which does not contain an induced $K_{a,b}$ is $O(\log(a+b))$. 

Before we move on to the proof, we would like to note there are multiple ways (excluding the more general result proved in~\cite{Girão2023}) of proving the existence of triangle-free graphs with large chromatic number in which any $K_{a,b}$-free induced subgraph has bounded chromatic number. But the bounds obtained from those proofs are much worse that ours. We sketch two of them below\footnote{The first proof was communicated by the anonymous reviewer and the second proof was communicated by Sophie Spirkl. We are grateful to both of them for pointing them out.} for completeness, which although somewhat known, was not documented specifically anywhere to the best of our knowledge.

 First observe that if a triangle-free graph forbids a $K_{a,b}$ as a induced subgraph then it trivially forbids $K_{a,b}$ as a subgraph. Also it is easy to see that every $k$-chromatic graph has a $k$-chromatic subgraph of minimum degree at least $k-1$. Now for the first proof, we first note that for any $k\in \mathbb{N}$, Pawlik~\etal~\cite{PAWLIK20146} constructed triangle-free graphs $G_k$ with chromatic number greater than or equal to $k$, that can also be represented as line segment intersection graphs in the plane. Also in~\cite{PAWLIK20146} it is noted that if we take $H$ be a 1-subdivision of some non-planar graph then no subdivision of such $H$ is representable as an intersection graph of segments. Thus by the observations above, it is easy to see that a result of K{\"{u}}hn and Osthus (Theorem $1$ in~\cite{DBLP:journals/combinatorica/KuhnO04a}) directly proves that the $K_{a,b}$-free induced subgraph of the triangle-free graphs $G_k$ constructed in~\cite{PAWLIK20146} must have bounded chromatic number. 

For the second proof, first note that Gyárfás~\cite{Gyárfás115} observed that the triangle-free shift graph $G_{n,2}$ with its natural orientation forbids a $4$ vertex path $\vec{P}_4$ oriented $\rightarrow\leftarrow\rightarrow$ as an induced subgraph. By a result of Burr~\cite{Burr},  we know that any orientation of a $k^2$-chromatic graph contains each oriented $k$-edge tree. Now consider the oriented tree $\vec{T}$ with $a+b+1$ edges, which has two vertices $p,p'$ with an edge between them oriented from $p'$ to $p$ and which has a set $A$ consisting of $a$ vertices (other than $p'$) which are in-neighbours of $p$ and a set $B$ consisting of $b$ vertices (other than $p$) which are out-neighbours of $p'$. Now let $H$ be a $(a+b+1)^2$-chromatic induced subgraph of $G_{n,2}$, and orient the edges of $H$ using the induced natural orientation to get the oriented graph $\vec{H}$. Now by the result of Burr, $\vec{H}$ must contain $\vec{T}$ as a subgraph. But this implies that the sets of vertices $A$ and $B$ of $\vec{T}$ present in $V(H)$ must form a complete bipartite graph $K_{a,b}$ (otherwise, we either get a triangle or a $4$ vertex path $\vec{P}_4$ oriented $\rightarrow\leftarrow\rightarrow$ which is a contradiction by the observation of Gyárfás~\cite{Gyárfás115} as noted above). Thus we get a bound of $(a+b+1)^2$ for $K_{a,b}$-free induced subgraph of $G_{n,2}$ by this argument.

Now we move to the proof of the main result of this section that give a much better bound that the ones we get from the arguments above. Recall that the \emph{degeneracy} of a graph $G$ is the least $k$ such that every induced subgraph of $G$ contains a vertex with $k$ or fewer neighbors. It is well known that if the degeneracy of a graph $G$ is $k$ then the chromatic number of the graph is bounded by~$k+1$.

\begin{theorem} \label{Complete bipartite free induced}
Any $K_{a,b}$-free induced subgraph $H$ of the shift graph $G_{n,2}$ satisfies $\chi(H)= \mathcal{O}(\log(a+b))$.
\end{theorem}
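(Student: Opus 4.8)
The plan is to bound the degeneracy of $H$ by $\mathcal{O}(\log(a+b))$; since $\chi(H) \leq \operatorname{degeneracy}(H) + 1$, this suffices. Since $H$ is an induced subgraph of $G_{n,2} = L(\vec{T}_n)$, by Observation~\ref{the induced shift graph} and Observation~\ref{structure of S(G)} we may think of $H$ as $L(\vec{G})$ for a suitable directed acyclic graph $\vec{G}$ on $n$ vertices (delete from $\vec{T}_n$ the edges not corresponding to vertices of $H$), equipped with its bag decomposition $\B_\sigma = \{B(i)\}$ and index function $I$. To bound degeneracy, every induced subgraph $H'$ of $H$ is again of the form $L(\vec{G}')$ for some DAG $\vec{G}'$, so it is enough to exhibit, in every such $H = L(\vec{G})$, one vertex of low degree.

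The natural candidate is a vertex $u$ in the \emph{last nonempty bag} $B(i_0)$ (largest index $i_0$ with $B(i_0) \neq \emptyset$), or more generally a vertex whose index is as large as possible. By Observation~\ref{structure of S(G)}(i), $B(i_0)$ is independent, and by Observation~\ref{structure of S(G)}(ii) every neighbor $w$ of $u$ has $I(w) < I(u) = i_0$ (there is no larger index available). So all neighbors of $u$ lie in strictly earlier bags, and each such neighbor $w$ corresponds to an arc $v_a v_{i_0}$ into $v_{i_0}$ in $\vec{G}$; the vertex $u \in B(i_0)$ is the arc $v_{i_0} v_c$ for the unique out-neighbor $v_c$ of $v_{i_0}$ forced by $u$ being adjacent to things in $B(i_0)$. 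Now comes the key point: I claim the neighborhood of $u$, viewed as a subset of vertices-of-$L(\vec{G})$ that are all pairwise adjacent to $u$, has a strong complete-bipartite-type structure whenever it is large. Concretely, the neighbors of $u$ split according to their index; by Observation~\ref{structure of S(G)}(iii) and (v), if two neighbors $u_1, u_2$ of $u$ both have index less than $I(u)$, then they are non-adjacent and are both adjacent to \emph{all} of $B(I(u))$. So the neighbors of $u$ with the same index form an independent set, and moreover the set of all in-neighbor vertices together with $B(i_0)$ begins to look like a complete bipartite graph. If $\deg_H(u)$ is at least roughly $(a+b)$, we can extract from it — using the pigeonhole on indices together with the uniqueness clause of Observation~\ref{structure of S(G)}(iv) describing which vertex of $B(i)$ sees all of $B(j)$ — a complete bipartite subgraph $K_{a,b}$, and then argue (using triangle-freeness and the $\rightarrow\leftarrow\rightarrow$-free structure noted via Gyárfás, or directly via the index monotonicity of Observation~\ref{structure of S(G)}(ii)) that this $K_{a,b}$ is in fact \emph{induced}, contradicting $K_{a,b}$-freeness.

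The main obstacle is the last sentence: arranging that the extracted $K_{a,b}$ is induced rather than merely a subgraph. The clean way is to choose the two sides carefully so that all edges run "the same direction'' in the natural orientation. Here is the intended mechanism. Fix $u$ of maximum index; let $R = N_H(u)$, so every $w \in R$ has $I(w) < I(u)$, and $u$ itself is adjacent to every vertex of $B(I(u))$ by Observation~\ref{structure of S(G)}(v) applied with the pair $(w, u)$ — wait, more carefully: each $w\in R$ satisfies, with $u_1 := u$ playing the high-index role, that $w$ and any other $w' \in R$ are non-adjacent and both adjacent to all of $B(I(u))$. Thus $R$ is independent and $R \cup \{u\}$ is completely joined to $B(I(u)) \setminus \{u\}$; if $|R| \geq a$ and $|B(I(u))| \geq b$ we directly get $K_{a,b}$ on sides $R' \subseteq R$ (size $a$) and $B'' \subseteq B(I(u))\setminus\{u\}$ plus possibly $u$ (size $b$), and this is induced because $R'$ is independent, $B''$ is independent, and there are no extra cross edges by Observation~\ref{structure of S(G)}(iv)'s uniqueness statement (only one vertex per earlier bag sees $B(I(u))$, and these are exactly the vertices of $R$, each in a distinct earlier bag — a subtlety to check, since two neighbors of $u$ could share a bag only if... they can't, by (iii) again). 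So if $\deg_H(u) \geq a$ and the last bag has size $\geq b$ we are done; the remaining case is that the last bag $B(i_0)$ has size $< b$, in which case $i_0 \geq n - b + 1$, and then a vertex of maximum index has all neighbors in the at most $b$ bags of index $\geq$ something — here I would instead do a cleaner counting: iterate the "peel off the last bag'' argument, or observe that a vertex of the $i_0$-th bag has at most ($i_0$ choices of earlier bag, but one neighbor per bag) $\leq b$ neighbors of smaller index, giving degree $\leq b$, hence degeneracy $\leq \max(a,b)$ — which is only $O(a+b)$, not $O(\log(a+b))$. Getting the logarithmic bound therefore requires the extra idea: in the large-degree case one does not find $K_{a,b}$ immediately but first passes to a bipartite-like "blow-up'' structure on the index axis where the relevant quantity is the \emph{number of distinct index values} among neighbors, and a set of $t$ neighbors spread over $t$ distinct bags yields, via the uniqueness in (iv) and a Sauer–Shelah / $\binom{k}{\lfloor k/2\rfloor}$-type counting as in Lemma~\ref{chromatic number of S(G)}, a $K_{a,b}$ once $t = \Omega(\log(a+b))$; I expect the proof to mirror the proof of Lemma~\ref{chromatic number of S(G)} closely, replacing "proper coloring'' by "avoiding $K_{a,b}$.''
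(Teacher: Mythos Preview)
Your proposal has a genuine gap: the approach of bounding the degeneracy of $H$ directly cannot give $O(\log(a+b))$, and your final suggestion for how to recover the logarithm is not the right mechanism.

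The argument you sketch does show something true: if $u$ lies in the last nonempty bag $B(i_0)$, then its neighbors $R$ form an independent set completely joined to $B(i_0)$, so $H$ contains an induced $K_{|R|,|B(i_0)|}$. From $K_{a,b}$-freeness this forces $\min(|R|,|B(i_0)|) < a$ or $\max(|R|,|B(i_0)|) < b$, and with some case analysis you can extract an $O(a+b)$ bound on $\chi(H)$ (though not cleanly on the degeneracy, since when the last bag is tiny its vertices can still have large degree). But there is no Sauer--Shelah or $\binom{k}{\lfloor k/2\rfloor}$ phenomenon hiding in the neighbor-index structure of a single vertex that turns this into a logarithm; your last paragraph is guessing at a mechanism that is not there.

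The idea you are missing is to push the problem down one level. Write $H = L(\vec{T}')$ for a spanning acyclic subdigraph $\vec{T}'$ of $\vec{T}_n$, and bound $\chi(T')$ rather than $\chi(H)$. Partition $V(T')$ according to whether $|B_H(v_i)| \leq b-1$ or $|B_H(v_i)| \geq b$. On the first part, each $v_i$ has out-degree at most $b-1$ in $\vec{T}'$, so $T'$ restricted there is $(b-1)$-degenerate. On the second part, if some $v_i$ had in-degree $\geq a$ within that part, the $a$ corresponding parents together with the $\geq b$ vertices of $B_H(v_i)$ would form an induced $K_{a,b}$ in $H$ (this is exactly your Observation~\ref{structure of S(G)}(iv)/(v) computation, but read at the level of $T'$); so that part is $(a-1)$-degenerate. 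Hence $\chi(T') \leq a+b$. Now the logarithm comes for free from Lemma~\ref{chromatic number of S(G)}: $\chi(H) = \chi(L(\vec{T}')) = O(\log \chi(T')) = O(\log(a+b))$. The log is not produced by a new combinatorial argument inside $H$; it is the standard line-digraph chromatic drop, applied after a linear bound on the base graph.
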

\begin{proof}
We know from Observation~\ref{the shift graph} that $G_{n,2}=L(\vec{T}_n)$, where $\vec{T}_n$ is the directed acyclic complete graph on $n$ vertices. Also, let $\vec{T}'$ be the subdigraph of $\vec{T}_n$ such that $H=L(\vec{T}')$.
    Let  $v_1,v_2, \ldots, v_n$ be the topological ordering $\sigma$ of the vertices of the directed acyclic complete graph $\vec{T_n}$. Let $\B_{\sigma}  =\{B(i): i=1, 2, \ldots , n-1\}$ be the bag decomposition of $L(\vec{T}_n)$.
Let $B_{H}(v_i)=B(i) \cap V(H), \; \text{for all} \; i=1, 2, \ldots , n-1$. Now we partition $V(\vec{T}_n)$ into two sets $L$ and $R$ as follows. 
Let $L:=\{ v_i \subseteq V(\vec{T}_n):  |B_{H}(v_i)|\leq b-1\}$ and $R:=\{ v_i \subseteq V(\vec{T}_n): |B_{H}(v_i)| \geq b \}$. Thus $L$ denotes the set of all vertices of $\vec{T}_n$ such that $B_{H}(v_i)$ has size at most $b-1$ and $R$ denotes the set of all vertices of $\vec{T}_n$ such that $B_{H}(v_i)$ has at least $b$ vertices.

Let $T'[L]$ denote the induced subgraph of $T'$ on $L$. Similarly let $T'[R]$ denote the induced subgraph of $T'$ on $R$. \\

\emph{Claim 1:} $\chi(T'[L])\leq b$.

\begin{claimproof}
      Let $\sigma_1=\{v_{i_1}, v_{i_2}, \ldots, v_{i_m}\}$ be an ordering of all the vertices of $T'[L]$ such that $1 \leq i_k < i_{k'}\leq n$, for all $k>k'$.
 For any $1\leq k \leq m$, let $S(v_{i_k})$ denote the subset of vertices in $\{v_{i_1}, v_{i_2}, \ldots, v_{i_{k-1}}\}$ to which $v_{i_k}$ is adjacent in $T'[L]$. So, $S(v_{i_1})$ is empty. Now Observation~\ref{structure of S(G)}(iii) and $|B_{H}(v_{i_k})|\leq b-1$ together implies $|S(v_{i_k})|\leq b-1$ for all $k=1, 2, 3, \ldots, m$. Given any induced subgraph $H'$ of $T'[L]$, let $m^* \leq m$ be such that $v_{i_{m^*}} \in H'$ and $v_{i_j} \notin H'$ for any $j> m^*$. Then $\mathrm{degree}(v_{i_{m^*}})\leq b-1$. Hence the degeneracy of $T'[L]$ is bounded by $b-1$. Thus, $\chi{(T'[L])}\leq b$.
\end{claimproof}

\emph{Claim 2: $\chi{(T'[R])}\leq a$.}

\begin{claimproof}
    Let $\sigma_1=\{v_{i_1}, v_{i_2}, \ldots, v_{i_{m'}}\}$ be an ordering of all the vertices of $T'[R]$ such that $1 \leq i_k < i_{k'}\leq n$ for all $k<k'$. For any $1\leq k \leq m'$, let $S(v_{i_k})$ denote the subset of vertices in $\{v_{i_1}, v_{i_2}, \ldots, v_{i_{k-1}}\}$ to which $v_{i_k}$ is adjacent in $T'[R]$. So, $S(v_{i_1})$ is empty.

    Now observe that if $v_{i_z}$ is adjacent to $v_{i_{z'}}$ in $T'[R]$, with $i_{z'}<i_{z}$ then by Observation~\ref{structure of S(G)}(v), we know that exactly one vertex of $B_{H}(v_{i_{z'}})$ is adjacent to all vertices of $B_{H}(v_{i_z}))$.
    Hence if $|S(v_{i_k})| \geq a$, then there exists at least $a$ distinct vertices of $H$ such that each of them is adjacent to all the vertices of  $B_{H}(v_{i_k})$. Now since  $|B_{H}(v_{i_k})| \geq b$, we get a $K_{a,b}$ as a subgraph in $H$ (which must be induced as the graph is triangle-free), which gives a contradiction. So we have $|S(v_{i_k})| \leq a-1$ for all $k=1, 2, 3, \ldots, m'$. Now given any induced subgraph $H'$ of $T'[R]$, let $m^*\leq m'$ be such that $v_{i_{m^*}} \in H'$ and $v_{i_j} \notin H'$ for any $j> m^*$. Then $\mathrm{degree}(v_{i_{m^*}})\leq a-1$. Hence the degeneracy of $T'[R]$ is bounded by $a-1$. Thus, $\chi{(T'[R])}\leq a$.
\end{claimproof}

Thus $\chi(T')\leq a+b$. Now since  $H=L(\vec{T}')$, Lemma~\ref{chromatic number of S(G)} implies the theorem.
\end{proof}

Since $G_{n,2}$ is triangle-free by Lemma~\ref{odd girth}, the clique number of $G_{n,2}$ is the same as the clique number of $K_{a,b}$. Moreover, by Lemma~\ref{chromatic number of S(G)}, we know that the chromatic number of $G_{n,2}$ tends to infinity as $n$ tends to infinity. Hence, the conditions of Theorem~\ref{girao} are satisfied and Theorem~\ref{Complete bipartite free induced} implies that the constant $c_F$ in Theorem~\ref{girao} is bounded by $\mathcal{O}(\log(a+b))$ if $F=K_{a,b}$.

\section{AOP property of graphs} \label{subsec:AOP}
In this section, we study special acyclic orientations of simple undirected graphs.
It is known that there is a direct connection between chromatic number and acyclic orientations of a graph. This connection is given by the following proposition which is the acyclic case of the Gallai-Roy-Hasse-Vitaver theorem~\cite{DBLP:books/daglib/0030491}.

\begin{proposition}[\cite{Girão2023}]\label{coloring and orientation}
    A graph L is $k$-colorable if and only if it has an acyclic orientation $\vec{L}$ that contains no directed paths of length $k$.
\end{proposition}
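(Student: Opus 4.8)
The plan is to prove both implications by the standard longest-directed-path argument, keeping in mind that here ``length'' counts \emph{edges}, so a directed path of length $k$ has $k+1$ vertices.

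For the forward direction, I would start from a proper $k$-coloring $c\colon V(L)\to\{1,2,\dots,k\}$ and orient every edge $uv$ from its lower-colored endpoint to its higher-colored endpoint (this is well defined since $c(u)\neq c(v)$ for an edge $uv$). Along any directed path $v_0\to v_1\to\cdots\to v_\ell$ the colors strictly increase, so $c(v_0)<c(v_1)<\cdots<c(v_\ell)$. This immediately yields acyclicity (a directed cycle would force some $c(v)<c(v)$) and also forces $\ell+1\leq k$, i.e.\ $\ell\leq k-1$; hence the orientation has no directed path of length $k$.

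For the backward direction, I would take an acyclic orientation $\vec{L}$ in which every directed path has at most $k-1$ edges, and define $c(v)$ to be $1$ plus the number of edges in a longest directed path of $\vec{L}$ ending at $v$. Acyclicity is exactly what makes this quantity finite and well defined; one can make this precise by processing the vertices along a topological ordering, which exists since $\vec{L}$ is acyclic. By the hypothesis on path lengths, $c(v)\in\{1,\dots,k\}$ for every $v$. If $uv$ is an edge oriented $u\to v$ in $\vec{L}$, then appending the arc $u\to v$ to a longest directed path ending at $u$ produces a directed path ending at $v$, so $c(v)\geq c(u)+1$ and in particular $c(u)\neq c(v)$. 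Thus $c$ is a proper $k$-coloring of $L$, completing the proof.

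I do not expect any genuine obstacle in this argument; the only points that require care are the off-by-one between edge-length and vertex-count hidden in the phrase ``directed path of length $k$,'' and the appeal to acyclicity (equivalently, to the existence of a topological ordering) that guarantees ``a longest directed path ending at $v$'' is well defined and of bounded size. Everything else is a direct verification.
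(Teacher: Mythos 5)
Your argument is correct and is the standard proof of the acyclic Gallai--Roy--Hasse--Vitaver theorem: orienting edges from lower to higher color gives the forward direction, and coloring each vertex by one plus the length of a longest directed path ending at it gives the converse, with the off-by-one between edge-count and vertex-count handled properly. The paper itself states this proposition with a citation and offers no proof, so there is nothing to compare against; your write-up would serve as a complete self-contained justification.
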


Recall that a simple undirected graph is said to have the AOP property if it can be oriented acyclically such that there exists at most one directed path between any pair of vertices. It is easy to observe that graphs having AOP property must be triangle-free, as it is impossible to acyclically orient a triangle such that any two vertices have at most one directed path between them. Moreover, if a graph $G$ does not have the AOP property then in any acyclic orientation $\vec{G}$ of $G$, there exists two vertices $u,v \in \vec{G}$ such that there are at least two disjoint directed paths between them. Kierstead and Trotter~\cite{DBLP:journals/dm/KiersteadT92} proved that the triangle-free Zykov graphs~\cite{Zykov1949} have the AOP property. The following proposition in~\cite{Girão2023} shows that any graph with girth sufficiently higher than its chromatic number has the AOP property.
\begin{proposition}[\cite{Girão2023}]\label{AOP property graphs}
    Let $n \geq 2$. Any graph $X_n$ with chromatic number
$n$ and girth greater than $2(n - 1)$ can be oriented acyclically such that there exists at most one directed path between any pair of vertices.
\end{proposition}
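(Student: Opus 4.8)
The plan is to build an explicit acyclic orientation of $X_n$ from a fixed proper $n$-coloring. Let $c : V(X_n) \to \{1, 2, \ldots, n\}$ be a proper coloring. Orient every edge $uv$ from $u$ to $v$ whenever $c(u) < c(v)$. This orientation is clearly acyclic, since along any directed path the color strictly increases; in particular every directed path has length at most $n-1$, which is consistent with Proposition~\ref{coloring and orientation}. So the only thing left to verify is the \emph{one-path} condition: between any ordered pair $(u, v)$ there is at most one directed path from $u$ to $v$.

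First I would suppose, for contradiction, that there are two distinct directed paths $P$ and $Q$ from $u$ to $v$. Consider the symmetric difference of their edge sets, or more concretely walk along $P$ and $Q$ until they first diverge and track where they next meet: this produces two internally disjoint directed subpaths $P', Q'$ that share only their common endpoints $x$ and $y$ (where $x$ is a vertex where the paths split and $y$ the next vertex where they re-merge), with both $P'$ and $Q'$ directed from $x$ to $y$. Then $P'$ followed by the reverse of $Q'$ is a cycle in the underlying undirected graph $X_n$. The key observation is a length bound: since the orientation follows the coloring, every directed path from $x$ to $y$ has length at most $c(y) - c(x) \leq n - 1$. Hence both $P'$ and $Q'$ have length at most $n-1$, so the cycle they form has length at most $2(n-1)$. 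This contradicts the hypothesis that $\mathrm{girth}(X_n) > 2(n-1)$.

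The one subtlety I would be careful about is that $P'$ and $Q'$ really do form a genuine cycle and not a degenerate closed walk — i.e. that they are internally vertex-disjoint and each has length at least $1$, so that the concatenation has length at least $2$ and more importantly does not repeat a vertex other than $x$ and $y$. This is handled by the standard "first divergence, first re-convergence" argument: choose $x$ to be the last vertex before $P$ and $Q$ use different edges, and $y$ to be the first vertex after $x$ that lies on both the remaining part of $P$ and the remaining part of $Q$; such a $y$ exists because $v$ itself is common. Then by minimality the $x$-$y$ pieces are internally disjoint, and they are distinct as edge sets, so their union is a cycle of length between $3$ and $2(n-1)$. I expect this bookkeeping about extracting a genuine short cycle from two distinct paths to be the only place requiring care; the rest is immediate from the coloring-to-orientation correspondence and the girth hypothesis.
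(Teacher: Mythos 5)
Your proof is correct, and since the paper only cites this proposition from Girão et al.\ without reproducing a proof, there is nothing in the paper to diverge from: your argument (orient along a proper $n$-coloring, note every directed path has length at most $n-1$, and extract from two distinct $u$--$v$ paths a pair of internally disjoint directed $x$--$y$ subpaths whose union is a cycle of length at most $2(n-1)$, contradicting the girth hypothesis) is exactly the standard one. The only point worth making explicit is that the extracted cycle genuinely has length at least $3$: the two $x$--$y$ subpaths cannot both be the single edge $xy$ in a simple graph, so the union is a real cycle rather than a doubled edge; your ``first divergence, first re-convergence'' bookkeeping handles the rest.
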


From Proposition~\ref{AOP property graphs}, we easily see that bipartite graphs have the AOP property and that planar graphs with girth strictly greater than $4$ also have the AOP property as their chromatic number is known to be bounded by $3$ (a classical theorem of Grötzsch~\cite{grotzsch1959dreifarbensatz}).

Observe that in order to have two disjoint directed paths between two vertices, one first needs to have a cycle. Now if the girth is large, one has intuitively more options to orient the edges of each cycle to make sure we do not have two vertices with two distinct directed paths between them. Hence a very natural question to ask here is whether all graphs with sufficiently large girth have the AOP property. We conjecture the following in relation to it.

\begin{conjecture} \label{conjecture} For any $g \in  \mathbb{N}$, there exist graphs with girth greater than or equal to $g$ that do not have the AOP property.
\end{conjecture}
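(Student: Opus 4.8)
\textbf{Proof strategy for Conjecture~\ref{conjecture}.}

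The plan is to construct, for every $g$, a graph $G_g$ with girth at least $g$ in which \emph{every} acyclic orientation forces two internally disjoint directed paths between some pair of vertices. My first step would be to understand the obstruction more carefully: in an acyclic orientation $\vec{G}$, a pair $u,v$ with two directed paths between them is, by Observation~\ref{odd cycle}-type reasoning, essentially an undirected cycle $C$ on which the orientation has exactly two ``sources'' (local minima in the topological order restricted to $C$) and two ``sinks''; equivalently, $C$ is oriented as two directed $u$--$v$ paths. So the AOP property is exactly the statement that one can acyclically orient $G$ so that every cycle has at least three local sources (equivalently at least three ``descents'' around the cycle when read in the topological order). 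The combinatorial heart is therefore: build a high-girth graph whose cyclic structure is so entangled that no linear order of the vertices can break all cycles into three-or-more-source configurations simultaneously.

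The natural approach is a \emph{probabilistic / counting} one in the spirit of Erdős's high-girth high-chromatic-number construction. Take $G_g$ to be a random graph $G(n,p)$ with $p = n^{\alpha - 1}$ for a small constant $\alpha>0$ chosen so that the expected number of short cycles (length $<g$) is $o(n)$; delete a vertex from each such short cycle to obtain a graph of girth $\geq g$ on $\Theta(n)$ vertices that still has many edges. The key new estimate I would need is an upper bound on the number of acyclic orientations of $G(n,p)$ that \emph{are} AOP-orientations: for a fixed linear order $\sigma$ of the vertices (there are $n!$ of them), the orientation is determined, and I would bound the probability that this fixed-order orientation has no ``bad cycle.'' A bad cycle is, concretely, two vertices joined by two internally disjoint paths each of which is monotone decreasing in $\sigma$; such a configuration is just a pair of increasing sequences in $\sigma$ with common endpoints, i.e.\ a ``theta-subgraph'' that is monotone on both arcs. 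If the graph is dense enough (average degree a large constant), a union-bound over the $n!$ orders against the probability of avoiding every monotone theta should show that whp no order yields an AOP-orientation, hence $G_g$ has no AOP orientation at all; then pass to the high-girth subgraph as above, checking that removing $o(n)$ vertices does not destroy the property (this is the delicate point — one must arrange the bad configurations to be robust, e.g.\ by insisting there are $\Omega(n)$ of them or by choosing a sparser but more structured construction).

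The step I expect to be the main obstacle is precisely that last robustness issue: the Erdős deletion trick removes a small number of vertices, but ``not having the AOP property'' is a global, monotone-\emph{under-deletion}-unfriendly property (deleting vertices can only make AOP \emph{easier}), so one cannot simply prove the random graph is non-AOP and then delete. Two ways around this that I would pursue: (1) work with a bipartite-like or highly symmetric explicit construction (for instance an appropriate blow-up or iterated line-digraph construction as used elsewhere in this paper, or a high-girth generalized Kneser/shift-type graph) where the abundance of ``bad'' theta-subgraphs is guaranteed deterministically and survives bounded vertex deletion; or (2) replace the deletion argument with a direct first-moment argument showing that whp \emph{after} conditioning on girth the graph is still non-AOP, which requires controlling the dependence between the girth event and the ``every order is bad'' event — plausibly doable since short cycles are local while the AOP obstruction is global. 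Either route reduces the conjecture to a concrete extremal estimate about monotone theta-subgraphs in dense graphs, which I would isolate as the one lemma to prove in full.
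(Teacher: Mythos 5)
First, note that the statement you are proving is left \emph{open} in the paper: it is stated as Conjecture~\ref{conjecture}, and the paper only establishes the case $g=5$ and the weaker variant with \emph{odd-girth} in place of girth (via a deterministic gadget: start from a graph $G_0$ of chromatic number $4$ and girth $5$, use the Gallai--Roy bound to force a directed $3$-path in every acyclic orientation, and attach apex vertices closing every such path of $G_0$ into a $5$-cycle, then apply Lemma~\ref{orientation of n-cycle}). A footnote records that the full conjecture was subsequently resolved by Spirkl and coauthors, but no proof appears here. So there is no proof in the paper to match yours against; what matters is whether your argument stands on its own. It does not yet.

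Your reformulation of the obstruction is essentially right but off by one: a pair of internally disjoint directed $u$--$v$ paths is a cycle with exactly \emph{one} local source and \emph{one} local sink in the orientation (not two of each), so the AOP property asks for an acyclic orientation in which every cycle has at least \emph{two} sources; also the configuration is a monotone cycle, not a theta. These are cosmetic. The genuine gap is the one you yourself flag: the entire argument hinges on showing that \emph{after} the high-girth cleanup, \emph{every} one of the $n!$ vertex orders still admits a long cycle traced out by two $\sigma$-increasing internally disjoint paths. Since non-AOP is destroyed (not preserved) by vertex deletion, the standard Erd\H{o}s delete-a-vertex-per-short-cycle step cannot be applied after the union bound; you would need either a union bound over all orders \emph{and} all candidate deletion sets, or a robustness statement guaranteeing $\Omega(n)$ vertex-disjoint bad configurations per order, and in the sparse regime $p = n^{\alpha-1}$ forced by the girth requirement neither estimate is routine --- the relevant first/second-moment computation for ``monotone long cycles under a worst-case order'' is exactly the lemma you defer, and without it the proof does not close. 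Until that lemma is proved (or replaced by an explicit construction in the spirit of the paper's $g=5$ argument, where the difficulty is instead that closing every long directed path into a short cycle tends to reintroduce short cycles), this remains a plausible plan rather than a proof.
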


It is easy to see that if a graph $H$ does not have the AOP property then any graph $G$ which has $H$ as a subgraph also does not have the AOP property. Hence, any graph $G$ with the AOP property has all the graphs that do not have the AOP property as forbidden subgraphs. Now let $G_g$ denote a graph with girth greater than or equal to $g$ that does not have the AOP property. Then by Proposition~\ref{AOP property graphs} we see that $\chi(G_g)$ tends to infinity as $g$ tends to infinity. Hence, Conjecture~\ref{conjecture} will trivially imply the existence of graphs with arbitrarily high chromatic number and girth that do not have the AOP property \footnote{We learned from Sophie Spirkl (personal communication) that she along with her co-authors managed to show that the conjecture indeed holds.}.
Although we are unable to resolve the conjecture, in this section we prove results of a similar nature and study the AOP property in the context of shift graphs and their induced subgraphs. We also prove the conjecture for $g=5$.

We first prove the following simple lemma that will be used to show that some graphs do not have the AOP property.

\begin{lemma}\label{orientation of n-cycle}
    Let $C= (u_1, u_2, u_3, \ldots, u_k)$ be a $k$-cycle. Let $\vec{C}$ be an orientation of $C$ that contains a directed path of length $k-2$. Then $\vec{C}$ is acyclic iff there exist two vertices in $\vec{C}$ such that there exist two disjoint directed paths between them.
\end{lemma}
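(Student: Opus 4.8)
The statement is a biconditional about an orientation $\vec{C}$ of a $k$-cycle $C=(u_1,\dots,u_k)$ that already contains a directed path of length $k-2$ (so $k-1$ of the $k$ edges form one consistent directed path, say $P: w_1 \to w_2 \to \cdots \to w_{k-1}$, where $w_1,\dots,w_{k-1}$ are $k-1$ consecutive vertices of the cycle and the two remaining vertices are $w_{k-1}$'s cycle-neighbour not on $P$... actually exactly $k-1$ vertices $w_1,\dots,w_{k-1}$ and the $k$-th vertex of the cycle is the one joining $w_{k-1}$ back to $w_1$). Let me set it up cleanly: after relabelling, $\vec{C}$ contains the directed path $u_1 \to u_2 \to \cdots \to u_{k-1}$ on $k-2$ edges, and the two remaining edges of $C$ are $e = u_{k-1}u_k$ and $f = u_k u_1$, each with one of two possible orientations. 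So there are exactly four orientations of $\vec{C}$ to consider, and I would simply enumerate them. The plan is to verify the biconditional by this case analysis, matching up ``acyclic'' with ``two vertex-disjoint directed paths exist'' in each case.

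\textbf{The four cases.} First, if $e$ is oriented $u_k \to u_{k-1}$ and $f$ is oriented $u_1 \to u_k$: then we have the directed path $u_1 \to u_k \to u_{k-1}$ and also the directed path $u_1 \to u_2 \to \cdots \to u_{k-1}$, and these two paths are internally disjoint and both go from $u_1$ to $u_{k-1}$ — so two vertex-disjoint (apart from endpoints) directed paths exist; and one checks this orientation is acyclic (the ordering $u_1, u_k, u_2, u_3, \dots, u_{k-1}$ is topological). So both sides hold. Second, if $e$ is oriented $u_{k-1}\to u_k$ and $f$ is oriented $u_k \to u_1$: then $u_1 \to u_2 \to \cdots \to u_{k-1} \to u_k \to u_1$ is a directed cycle, so $\vec{C}$ is not acyclic; and I must argue there is \emph{no} pair of vertices with two vertex-disjoint directed paths between them — but in a cycle oriented as a single directed cycle, between any two vertices $u_i, u_j$ there is exactly one directed path (going around the cycle in the consistent direction), so the right-hand side fails too. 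Third, $e$ oriented $u_{k-1}\to u_k$ and $f$ oriented $u_1 \to u_k$: then $u_k$ is a sink-like vertex reached by $u_1 \to u_2 \to \cdots \to u_{k-1} \to u_k$ and by $u_1 \to u_k$; these are again two internally-disjoint directed paths from $u_1$ to $u_k$, and the orientation is acyclic. Fourth, $e$ oriented $u_k \to u_{k-1}$ and $f$ oriented $u_k \to u_1$: then $u_k$ is a source, and all directed paths emanate from the subpath structure; one checks the orientation is acyclic (topological order $u_k, u_1, u_2, \dots, u_{k-1}$), and that there are two internally-disjoint directed paths, namely $u_k \to u_1 \to u_2 \to \cdots \to u_{k-1}$ and $u_k \to u_{k-1}$, both from $u_k$ to $u_{k-1}$. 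In cases 1, 3, 4 the orientation is acyclic and two disjoint directed paths exist; in case 2 neither holds; hence the biconditional.

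\textbf{What to be careful about.} The only real subtlety is the ``only if'' direction in case 2: I must be sure that when $\vec{C}$ is a single consistently-oriented directed cycle there really is no pair of vertices joined by two vertex-disjoint directed paths. This is clear because any directed path in a directed $k$-cycle is an arc of the cycle traversed forward, so between two distinct vertices there is a unique directed path (and none in the reverse direction), and a single directed path cannot be ``two disjoint paths''. I should also state precisely what ``disjoint'' means here (internally vertex-disjoint, i.e. sharing only their two endpoints) to match the usage elsewhere in the paper, and note that ``two disjoint directed paths between $u$ and $v$'' forces a closed walk hence is genuinely a nontrivial condition. I do not expect any step to be a serious obstacle; the lemma is essentially a finite check once the directed path of length $k-2$ is used to pin down the orientation of all but two edges.
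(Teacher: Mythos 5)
Your proof is correct and follows essentially the same route as the paper: after using the directed path of length $k-2$ to fix the orientation of all but two edges, both arguments reduce to a case analysis on the orientations of the two remaining edges incident to $u_k$. Your version is marginally more complete, since you explicitly dispose of the fourth case (the consistently directed cycle, which has no pair of vertices joined by two disjoint directed paths) to establish the ``if'' direction, which the paper merely asserts as clear.
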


\begin{proof}
    Clearly, if there exist two vertices in $\vec{C}$ such that there exist two disjoint directed paths between them, then $\vec{C}$ is acyclic. Without loss of generality let $\vec{P}= (u_1, u_2, u_3, \ldots, u_{k-1})$ be a directed path of length $k-2$ in $\vec{C}$.  Now assume $\vec{C}$ is acyclic. The following exhaustive set of cases may occur.
\begin{itemize}
     \item \emph{Case 1:} The vertex $u_k$ has an outgoing edge to both the vertices $u_1$ and $u_{k-1}$. 
     \item \emph{Case 2:} The vertex $u_k$ has an incoming edge from both the vertices $u_1$ and $u_{k-1}$. 
     \item \emph{Case 3:} The vertex $u_k$ has an outgoing edge to $u_{k-1}$ and an incoming edge from $u_1$. 
\end{itemize}
In the first case, there are two directed paths from $u_k$ to $u_{k-1}$. In the second case, there are two directed paths from $u_1$ to $u_{k}$. In the third case, there are two directed paths from $u_1$ to $u_{k-1}$. This finishes the proof.
\end{proof}

Observe that if we can prove that for any acyclic orientation of a fixed graph $G$, for some $k \in \mathbb{N}$ depending on the orientation, there is a $k$-cycle with a directed path of length $k-2$ in it, then the graph $G$ does not have AOP property by the above lemma. We will exploit this simple idea in showing the existence of graphs that do not have the AOP property.
We first show that there exist graphs of arbitrarily high odd-girth that do not have the AOP property. See figure~\ref{fig:odd} for a graph with odd-girth equal to $5$, which does not have the AOP property.

\begin{figure}
\begin{center}
\includegraphics{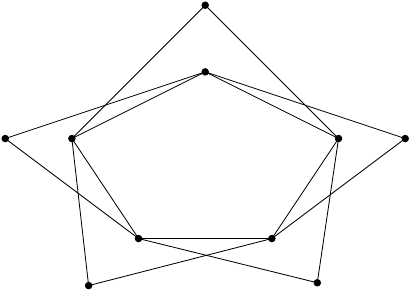}
\end{center}
\caption{A graph with odd-girth equal to $5$, that does not have the AOP property.}
\label{fig:odd}
\end{figure}

\begin{theorem} \label{high odd girth graphs not in AOP}
     For any $g\in \mathbb{N}$, there exists a graph $G$ with odd-girth greater than or equal to $g$, that does not have the AOP property.
\end{theorem}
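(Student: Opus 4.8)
The plan is to construct $G$ explicitly as an induced subgraph of a shift graph, and to use Lemma~\ref{orientation of n-cycle} together with the structure observations of Section~\ref{construction}. By Corollary~\ref{girth iteration}, for any target $g$ the iterated line digraph $L^r(\vec{T}_n)$ has odd-girth at least $2r+1$, so taking $r$ large enough handles the odd-girth requirement automatically once we have a suitable base digraph $\vec G$. The real content is therefore to find, for each $g$, a directed acyclic graph $\vec G$ (equivalently a spanning subdigraph of some $\vec T_n$) whose line digraph $L(\vec G)$, or an iterate of it, contains in \emph{every} acyclic orientation a $k$-cycle carrying a directed path of length $k-2$. By the remark following Lemma~\ref{orientation of n-cycle}, this forces the failure of the AOP property. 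The odd-girth figure (Figure~\ref{fig:odd}) is the $g=5$ seed; the job is to scale this idea to arbitrary odd-girth.

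The natural approach: pick a base graph $H_0$ known not to have the AOP property (e.g.\ the one in Figure~\ref{fig:odd}, which has odd-girth $5$) and realize it as $L(\vec G_0)$ for some acyclic $\vec G_0$ — possible by Observation~\ref{the induced shift graph}, since any graph of the form $L(\vec G)$ sits as an induced subgraph of a shift graph, and more to the point we want $H_0$ itself to \emph{be} such a line digraph. Then set $G := L^{r}(\vec G_0)$ for suitable $r$. Two things must be argued. First, that the non-AOP property is inherited upward through the line-digraph operation: if $H$ does not have the AOP property, does $L(\vec H)$ (for a well-chosen orientation, or for all acyclic orientations) also fail it? The cleanest route is probably to show directly that every acyclic orientation of $L^{r}(\vec G_0)$ contains a long directed path inside a short cycle — using Observation~\ref{structure of S(G)}(ii), which says the index ordering is always a topological ordering, to transfer a bad configuration from one level to the next. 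Second, that the odd-girth genuinely grows: this is exactly Corollary~\ref{girth iteration}, giving odd-girth $\geq 2r+1 \geq g$.

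The main obstacle I anticipate is the inheritance step: unlike subgraph containment (which trivially preserves non-AOP), passing to the line digraph is not obviously monotone for the AOP property, because an arbitrary acyclic orientation of $L(\vec G)$ need not come from an orientation of $G$ — only the natural orientation does. So the argument cannot simply say ``orient $L(\vec G)$ naturally and copy the bad paths.'' Instead I expect we must show a \emph{robustness} statement: in any acyclic orientation of $L(\vec G)$, for each vertex $v_i$ of $\vec G$ the bag $B(i)$ behaves enough like an independent set with prescribed adjacencies (Observation~\ref{structure of S(G)}(i),(iv),(v)) that a cycle with an almost-Hamiltonian directed path is unavoidable. Concretely I would identify inside $G$ a long cycle together with a few extra chords/vertices, track what that configuration becomes in $L(\vec G)$, and do a careful but finite case analysis — parallel to the three-case analysis in the proof of Lemma~\ref{orientation of n-cycle} — to show that whichever way the edges near the relevant bags are oriented, some induced cycle ends up with a directed path of length two less than its length. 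Once that local lemma is in hand, iterating $r$ times and invoking Corollary~\ref{girth iteration} finishes the proof; I would present the $g=5$ base case first (matching Figure~\ref{fig:odd}) and then the inductive/iterative step.
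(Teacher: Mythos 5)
Your proposal is a plan with its hardest step left open, and that step is arguably the wrong one to attempt. You want to start from a non-AOP seed, realize it as $L(\vec G_0)$ for an acyclic $\vec G_0$, and iterate the line digraph so that Corollary~\ref{girth iteration} pumps up the odd-girth; the missing piece is your ``inheritance'' claim that failure of the AOP property survives the passage from the seed to $L^r(\vec G_0)$. You correctly flag this as the main obstacle, but you do not prove it, and there is good reason to doubt it: the line digraph operation tends to \emph{preserve} the one-path property rather than destroy it. Lemma~\ref{AOP property of S(G)} shows that if $\vec G$ is acyclic with at most one directed path between any two vertices then so is $\vec L(\vec G)$, and Theorem~\ref{AOP graphs with high chromatic number and odd girth} uses exactly this to build iterated line digraphs that \emph{do} have the AOP property; so there is no monotonicity to lean on, and the promised ``careful but finite case analysis'' is not sketched in enough detail to see that an unavoidable bad configuration persists at level $r$. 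A secondary gap: not every graph is of the form $L(\vec G_0)$ for acyclic $\vec G_0$ (such graphs carry the rigid bag structure of Observation~\ref{structure of S(G)}), so even the assertion that the Figure~\ref{fig:odd} seed can serve as the base of the iteration is unjustified.

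The paper's proof avoids shift graphs entirely and is far more elementary. For odd $g>4$ it takes an induced $g$-cycle $C=(u_1,\dots,u_g)$ and adds, for each $i$, a new vertex $u_i'$ adjacent exactly to the two neighbours of $u_i$ on $C$. In any acyclic orientation, $\vec C$ is acyclic and $\chi(C)=3$, so Proposition~\ref{coloring and orientation} forces a directed path $(u_i,u_j,u_k)$ of length $2$ in $\vec C$; together with $u_j'$ this sits inside a $4$-cycle carrying a directed $2$-path, and Lemma~\ref{orientation of n-cycle} then produces two disjoint directed paths between some pair of vertices. A short replacement argument (swap each $u_i'$ on a shortest odd cycle for $u_i$) shows the odd-girth is exactly $g$. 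The moral is that only the \emph{odd}-girth needs to be large, so one can freely plant $4$-cycles to trap a directed $2$-path; your route insists on manufacturing the graph inside the shift-graph hierarchy, which is both unnecessary and works against the goal.
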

\begin{proof}
    Let $g>4$ be a fixed odd natural number. Consider the graph $G$ that consists of a induced odd cycle $C=(u_1, u_2, u_3, \ldots, u_g)$ of length $g$ and a set $S=\{u'_1, u'_2, \ldots u'_g\}$ of $g$ vertices, such that $u'_i$ is only adjacent to the two neighbours of $u_i$ in the induced cycle $C$ (that is $u_{i-1}$ and $u_{i+1}$ for $i>1$ and $u_2$ and $u_g$ for $i=1$),  for all $1 \leq i \leq g$. Assume, $\vec{G}$ be an acyclic orientation of $G$. So the induced subdigraph $\vec{C}$ of $\vec{G}$ is also acyclic. Now since $\chi(C)=3$, by Proposition~\ref{coloring and orientation}, we know there exists a directed $2$-path $\vec{P}=(u_i, u_j, u_k)$ in $\vec{C}$. We know by construction that the vertex $u'_j \notin C$ is adjacent to the vertices $u_i$ and $u_k$ of $P$. Hence $\vec{C'}=(u'_j, u_i, u_j, u_k)$ is a directed $4$-cycle containing a directed path $\vec{P}$ of length $2$, so by Lemma~\ref{orientation of n-cycle}, there exist two vertices in $\vec{C'}$ with two disjoint directed paths between them. Hence we have proved that it is impossible to acyclically orient the graph $G$ such that between any two vertices there is at most one directed path.
    Next we will show that the odd-girth of $G$ is equal to $g$. Let $C^*$ be the smallest odd cycle of $G$, then $C^*$ has no chord otherwise we will get a smaller odd cycle than $C^*$. Now assume that $C^*$ has a length strictly less than $g$. Let $u'_i \in C^* \cap S$. We know that the neighbors of $u'_i \in C^*$ are vertices of $C$ that are also neighbors of the vertex $u_i$. Hence, if $u_i \in C^*$ then $C^*$ has a chord, contradicting minimality. So, if $u'_i \in C^*$, we have $u_i \notin C^*$ and so we can simply replace every vertex $u'_i \in C^* \cap S$ by the vertex $u_i \in C$ and still obtain a cycle of the same length. After all the replacements, the cycle consists only of vertices of $C$. As $C$ is an induced cycle of $G$, $C^*$ must have a length equal to $g$. This finishes the proof.
\end{proof}

We move on to prove Conjecture~\ref{conjecture} for the case when $g=5$.

\begin{theorem}
    There exists a graph $G$ whose girth is equal to $5$ that does not have the AOP property.
\end{theorem}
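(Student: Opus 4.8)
The plan is to mimic the strategy used in Theorem~\ref{high odd girth graphs not in AOP}, but now we must be careful that the auxiliary gadgets we attach do not create any triangle or $4$-cycle, so that the resulting graph has girth exactly $5$. The key structural fact we want to exploit, as suggested by the remark after Lemma~\ref{orientation of n-cycle}, is: if every acyclic orientation of $G$ contains some $k$-cycle carrying a directed path of length $k-2$, then $G$ lacks the AOP property. So the construction should guarantee, via an unavoidable directed subpath, that such a ``long-path cycle'' always appears.

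First I would take a base $5$-cycle $C = (u_1, u_2, u_3, u_4, u_5)$. Since $\chi(C_5) = 3$, Proposition~\ref{coloring and orientation} guarantees that in any acyclic orientation $\vec{C}$ there is a directed path of length $2$, say $\vec{P} = (u_i, u_j, u_k)$ where $u_i u_j, u_j u_k \in E(C)$. If we could attach, for each such potential $2$-path, a vertex adjacent to its two endpoints $u_i$ and $u_k$ — as in the odd-girth proof — we would be done by Lemma~\ref{orientation of n-cycle}. The obstruction is that in $C_5$ the endpoints $u_i$ and $u_k$ of a $2$-path are at distance $2$ on the cycle, so a common neighbor of them would close a triangle with... no — actually $u_i$ and $u_k$ are non-adjacent in $C_5$, so a new common neighbor $w$ gives a $4$-cycle $(w, u_i, u_j, u_k)$, destroying girth $5$. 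To avoid this, instead of a single vertex I would attach a \emph{path of length $3$} (two internal vertices) joining $u_i$ and $u_k$; together with the length-$2$ arc $u_i \to u_j \to u_k$ this forms a $5$-cycle, and I would instead want a gadget that, once the direction of $\vec{P}$ is fixed, forces a directed path of length $3$ inside a $5$-cycle. Concretely, for each of the five pairs $\{u_i, u_k\}$ at cycle-distance $2$, I would add three new vertices forming a path $u_i - a - b - u_k$, but choose a larger subdivided structure (e.g. subdivide so the attached path has length $\geq 4$) to keep girth $5$; the point is to arrange that whichever direction $\vec{P}$ takes, one of these attached paths, together with part of $C$, yields a cycle of some length $k$ with an induced directed subpath of length $k-2$.

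The cleaner route, which I expect to actually carry out, is the following. Attach to $C$ two ``pendant'' $5$-cycles that share a path of length $2$ with $C$: for each edge-pair forming a potential $2$-path in $C$, build a $5$-cycle whose three-vertex arc is exactly that pair of edges of $C$ and whose other two edges go through two fresh vertices. Crucially, these fresh vertices are distinct across the different gadgets, and each such $5$-cycle shares only a path (not just vertices) with $C$, so no short cycle is created — one checks that any cycle in $G$ either lies in $C$, lies in one pendant $5$-cycle, or alternates between them through a shared length-$2$ path and hence has length $\geq 5$. Then: take any acyclic orientation $\vec{G}$; restrict to $\vec{C}$; extract a directed $2$-path $\vec{P} = (u_i, u_j, u_k)$; look at the pendant $5$-cycle $D$ built on this $2$-path; $D$ is a $5$-cycle containing the directed path $\vec{P}$ of length $2 = 5 - 3$... which is one short of what Lemma~\ref{orientation of n-cycle} needs. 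So I would instead build each pendant cycle as a \emph{$4$-cycle} whose two-edge arc is the $2$-path: but the endpoints $u_i, u_k$ of a $2$-path in $C_5$ are non-adjacent, so a common neighbor gives genuinely a $4$-cycle, violating girth. Hence the honest construction must use the $5$-cycle $C_5$ together with the fact that a directed $2$-path in it leaves a \emph{directed $2$-path's complement}: after orienting the $2$-path $u_i \to u_j \to u_k$, the remaining $C$-path from $u_k$ back to $u_i$ (length $3$) need not be directed, so I attach a gadget forcing it, or attach a vertex of degree $2$ to the two \emph{middle} vertices of that length-$3$ path.

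The main obstacle, and where the real work lies, is the simultaneous girth bookkeeping: every gadget I attach to kill an orientation has the potential to create a triangle or a $4$-cycle among the new vertices and $C$, so I must (a) identify precisely which pairs/vertices of $C$ a forcing gadget needs to touch, (b) choose the gadget large enough (sufficiently subdivided) that no cycle of length $\leq 4$ is formed, yet (c) small/rigid enough that in every acyclic orientation some cycle through it carries a directed path two shorter than its length, so Lemma~\ref{orientation of n-cycle} applies. I expect the final graph to be an explicit small graph (the one depicted in Figure~\ref{fig:odd} is presumably exactly it), and the proof to consist of: a case analysis over the position and direction of the forced directed $2$-path in $\vec{C}$, in each case exhibiting the relevant cycle and its long directed subpath and invoking Lemma~\ref{orientation of n-cycle}; followed by a short girth verification enumerating the (few) cycles of the construction. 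I would finish by remarking that this settles Conjecture~\ref{conjecture} for $g = 5$.
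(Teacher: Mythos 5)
There is a genuine gap: your proposal never actually produces a construction, and the obstacle you keep running into is not a matter of ``girth bookkeeping'' but of choosing the wrong base graph. Starting from $C_5$ (chromatic number $3$), Proposition~\ref{coloring and orientation} only forces a directed path of length $2$ in each acyclic orientation. To invoke Lemma~\ref{orientation of n-cycle} you would then need to embed that $2$-path in a $4$-cycle, and — as you observe yourself — any $4$-cycle kills girth $5$; embedding it instead in a $5$-cycle leaves you ``one short'' of the $k-2$ hypothesis, which you also observe. None of your fallback gadgets resolves this: a pendant $5$-cycle through a directed $2$-path gives nothing, and attaching a vertex to the two middle vertices of the complementary length-$3$ arc of $C_5$ creates a triangle, since those two vertices are adjacent. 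So the proposal, as written, has no complete argument for the non-AOP half of the theorem.

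The missing idea is to raise the chromatic number of the base graph rather than to engineer cleverer gadgets around $C_5$. The paper starts from a graph $G_0$ with $\chi(G_0)=4$ \emph{and} girth $5$ (such graphs exist by the Descartes-type constructions). Now every acyclic orientation of $G_0$ contains a directed path of length $3$, and closing a length-$3$ path into a cycle by adding one new vertex adjacent to its two endpoints yields a $5$-cycle carrying a directed path of length $3=5-2$, which is exactly what Lemma~\ref{orientation of n-cycle} needs. One iterates: repeatedly pick a length-$3$ path of $G_0$ not yet contained in a $5$-cycle of the current graph and attach a fresh vertex to its endpoints, until every length-$3$ path of $G_0$ lies in some $5$-cycle. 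The girth verification then works because (a) new vertices are pairwise non-adjacent and each has exactly two neighbours, both in $G_0$, so a triangle or $4$-cycle must use at most two new vertices; (b) a triangle or a $4$-cycle with one new vertex would force a short cycle inside $G_0$ or contradict the stopping rule; and (c) two new vertices attached to the same pair $\{b,d\}$ cannot occur, precisely because after the first one is added every length-$3$ path from $b$ to $d$ already lies in a $5$-cycle, so the second would never have been created. Your instinct that the final answer is a single explicit small graph (as in Figure~\ref{fig:odd}) is also off: that figure illustrates the odd-girth construction of Theorem~\ref{high odd girth graphs not in AOP}, whereas the girth-$5$ example is built on top of a $4$-chromatic girth-$5$ graph and is not small.
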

\begin{proof}
    Let $G_0$ be a graph with chromatic number $4$ and girth $5$. We know that such graphs exist~\cite{Descartes2,Descartes}. Now we will create a sequence of graphs iteratively as follows. Suppose $G_0, G_1, G_2, \ldots G_k$ is already defined. Now choose a $3$-path $P_k=(u_k, v_k, w_k) $ in $G_0$ which is not a part of any $5$-cycle in $G_k$, add a vertex $q_k$ and add two egdes connecting $q_k$ to the endpoints of the $3$-path namely ($u_k$ and $w_k$), thus creating a $5$-cycle. Let $G_{k+1}=G_k \cup \{q_k\}$ denote the new graph. It is to be noted that the $3$-path we are choosing is always a part of the initial graph $G_0$. This process must end after some iterations, since $G_0$ is a finite graph and has finitely many distinct paths of length $3$. Let $G=G_m$ be the final graph of this sequence. So every $3$-path of $G_0$ is a part of some $5$-cycle in $G$. Let $\vec{G}$ be any acyclic orientation of the graph $G$. Since $ \chi(G_0) \geq 4$, by Proposition~\ref{coloring and orientation} we know that there exists a directed $3$-path $\vec{P}$ of the induced subdigraph $\vec{G_0}$. By construction, $P$ belongs to some $5$-cycle $C$ of $G$. Hence by Lemma~\ref{orientation of n-cycle}, we know that there exists two vertices in $\vec{C}$ such that there exists two disjoint paths between them. Hence the graph $G$ does not have the AOP property. 
    
    Now we want to show that the girth of the graph is greater than or equal to $5$. Let $C=(a, b, c, d)$ be a $4$-cycle in $G$. Hence the four vertices of $C$ cannot all belong to $G_0$ as the girth of $G_0$ is equal to $5$. Since no two vertices in $G\setminus G_0$ are adjacent and any vertex in $G\setminus G_0$ is only adjacent to two vertices of $G_0$, we only have the following two cases.
\begin{itemize}

\item  \emph{Case 1: $a \in G\setminus G_0$ and $\{b, c, d\}\in G_0$.}

Observe that $c$ does not belong to any path of length $3$ starting at $b$ and ending at $d$ in $G_0$, otherwise, we will have a triangle in $G_0$. So any path of length $3$ starting at $b$ and ending at $d$ in $G_0$ is a part of a $5$-cycle in $G_0$ due to the presence of the vertex $c$. Hence the vertex $a$ clearly could never exist by construction. Hence we have a contradiction.

\item \emph{Case 2: $a, c \in G\setminus G_0$ and $\{b, d\}\in G_0$.}

Suppose $a \in G_i$ and $a \notin G_{i-1}$, similarly $c \in G_j$ and $c \notin G_{j-1}$. Without loss of generality assume $i<j$. Since $c \notin G_0$, $a$ does not belong to any $3$-path in $G_0$, starting at $b$ and ending at $d$. So any path of length $3$ starting at $b$ and ending at $d$ in $G_0$ is a part of a $5$-cycle in $G_i$ due to the addition of the vertex $a$. Hence the vertex $c$ clearly could never exist as it will violate the conditions of the construction of $G_j$. Hence we have a contradiction.
\end{itemize}   
Thus there cannot be any $4$-cycle in $G$. Now suppose $G$ has a triangle $T=(a, b, c)$. Clearly, all the vertices of $T$ cannot belong to $G_0$. Also since no two vertices in $G\setminus G_0$ are adjacent, only one vertex of the triangle $T$ must belong to $G\setminus G_0$. So without loss of generality assume $a \in G\setminus G_0$, and $b,c \in G_0$. By construction there must exist a $3$-path in $G_0$, starting at $b$ and ending at $c$. But since $b$ and $c$ are adjacent it will create a $4$-cycle in $G_0$ which is a contradiction. This finishes the proof. 
\end{proof}

Now we move on to study the AOP property in the context of shift graphs.
First, we show that $G_{9,2}$ does not have the AOP property. We need the following definition. Let $\vec{G}$ be a directed acyclic graph with $\sigma$ denoting a topological ordering of $\vec{G}$. Then for two adjacent vertices $u_1$ and $u_2$ in $L(\vec{G})$, we say that $u_1$ is a \emph{parent} of $u_2$ if $I(u_1)<I(u_2)$, where the index function $I$ is defined with respect to the topological ordering $\sigma$. Similarly, a vertex $u$ is said to be the parent of a bag $B_i$ if $u$ is adjacent to all the vertices of $B_i$ and $I(u)<i$.

\begin{theorem}
   The shift graph $G_{9,2}$ does not have the AOP property.
\end{theorem}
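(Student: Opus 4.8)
The plan is to show that $G_{9,2} = L(\vec{T}_9)$ cannot be acyclically oriented without creating two vertices joined by two internally disjoint directed paths. My strategy mirrors the technique flagged just before the statement: if in \emph{every} acyclic orientation of $G_{9,2}$ there is a $k$-cycle carrying a directed path of length $k-2$, then Lemma~\ref{orientation of n-cycle} finishes the job. Since $G_{9,2}$ is triangle-free (Lemma~\ref{odd girth}) its shortest cycles are $4$-cycles, so the cleanest target is a $4$-cycle containing a directed $2$-path. Equivalently, I want: in any acyclic $\vec{G}_{9,2}$, some vertex $u$ has two neighbors $u_i,u_k$ that are themselves joined by a directed $2$-path $u_i \to u_j \to u_k$ with $u_j$ also adjacent to $u$ — i.e., an induced (or just any) $4$-cycle with a ``cherry'' oriented as a path.

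**First I would** isolate a fixed combinatorial substructure inside $G_{9,2}$ that forces this. A natural candidate: in $G_{n,2}=L(\vec{T}_n)$ the vertices are pairs $(a,b)$ with $a<b$, adjacent when they share a coordinate in the shift pattern; many $4$-cycles are present, e.g. $(a,b),(b,c),(c,d),(b,d)$-type configurations. I would pick a small induced subgraph $H$ of $G_{9,2}$ — the text's use of $n=9$ strongly suggests a specific finite gadget, perhaps built from two interleaved short odd cycles or a "book" of $4$-cycles sharing structure — with the property that $\chi(H)$ is large enough (here $\geq 3$ suffices to force a directed $2$-path in any acyclic orientation of $H$ via Proposition~\ref{coloring and orientation}), and such that \emph{every} directed $2$-path that can arise in $H$ extends to a $4$-cycle-with-path. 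Then apply Lemma~\ref{orientation of n-cycle}. The role of $n=9$ is presumably that $G_{8,2}$ is not yet rich enough (its chromatic number is $\lceil \log_2 8\rceil = 3$, but the geometry may allow an AOP orientation), whereas at $n=9$ one can find several $4$-cycles overlapping so densely that no acyclic orientation escapes.

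**The main obstacle** — and the step I expect to be delicate — is the case analysis ensuring that \emph{no} acyclic orientation avoids the bad configuration. It is not enough that bad configurations exist; I must rule out every orientation. Concretely, I would fix the natural bag decomposition $\B_\sigma$ of $L(\vec{T}_9)$ from Observation~\ref{structure of S(G)}, use the index function $I$ to track which vertex of a $4$-cycle is the ``minimum'' one, and argue via Observation~\ref{structure of S(G)}(iii) and (v) that the local adjacency pattern around low-index vertices is forced. The subtlety is that the given acyclic orientation $\vec{G}$ of $G_{9,2}$ need \emph{not} be the natural orientation $\vec{L}(\vec{T}_9)$, so I cannot assume the index function governs edge directions; I only know $\vec{G}$ is acyclic, hence has \emph{some} topological order, and I must find the cherry within it. So the real work is: identify a fixed set of, say, three or four $4$-cycles in $G_{9,2}$ sharing enough vertices/edges that, in any acyclic orientation, the topological order forces at least one of them to contain a directed $2$-path on its two "side" vertices through a common apex.

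**To finish** I would, after exhibiting the gadget and the forced $4$-cycle $C'$ with directed $2$-path of length $k-2 = 2$, invoke Lemma~\ref{orientation of n-cycle} to conclude that $\vec{C'}$ contains two vertices joined by two disjoint directed paths, and since $C'$ is a subdigraph of $\vec{G}_{9,2}$ those two paths persist in $\vec{G}_{9,2}$. As this holds for every acyclic orientation, $G_{9,2}$ lacks the AOP property. I would also remark that monotonicity (a supergraph of a non-AOP graph is non-AOP, noted in the text) immediately yields the claimed conclusion for all $n \geq 9$, since $G_{9,2}$ is an induced subgraph of $G_{n,2}$ for $n\geq 9$ by the shift-graph containment $G_{m,2}\subseteq G_{n,2}$ for $m\le n$ (equivalently Observation~\ref{the induced shift graph} applied to $\vec{T}_9 \subseteq \vec{T}_n$).
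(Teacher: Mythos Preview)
Your plan has a concrete gap: restricting attention to $4$-cycles cannot work. Every $4$-cycle in $G_{9,2}=L(\vec{T}_9)$ has the form $(a,i)\,-\,(i,c)\,-\,(a',i)\,-\,(i,c')$ with $a,a'<i<c,c'$ (two ``parents'' of bag $B(i)$ together with two vertices of $B(i)$); this follows from the adjacency rule and Observation~\ref{structure of S(G)}. In the \emph{natural} orientation $\vec{L}(\vec{T}_9)$ every edge points from a parent to a bag vertex, so in each such $4$-cycle both parents are sources and both bag vertices are sinks --- there is no directed $2$-path at all. Thus the natural orientation is an acyclic orientation of $G_{9,2}$ in which \emph{no} $4$-cycle carries a directed path of length $k-2=2$, yet it still fails AOP (e.g.\ $(1,2)\!\to\!(2,4)\!\to\!(4,5)$ and $(1,2)\!\to\!(2,3)\!\to\!(3,4)\!\to\!(4,5)$). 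So the program ``find, in every acyclic orientation, a $4$-cycle with a directed $2$-path and invoke Lemma~\ref{orientation of n-cycle}'' is doomed; your ``main obstacle'' is not merely delicate but unachievable as stated.

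The paper's proof uses the $4$-cycle argument only as a first reduction: for any bag $B(i)$ with at least two vertices and at least two parents, Lemma~\ref{orientation of n-cycle} applied to the parent--bag $4$-cycles forces all parent--bag edges at $B(i)$ to be uniformly oriented (call $B(i)$ \emph{positive} if all parents point in, \emph{negative} otherwise). Five bags of $L(\vec{T}_9)$ satisfy both size conditions, so by pigeonhole three of them, say $B(i),B(j),B(k)$ with $i<j<k$, receive the same sign. In the positive case one then writes down two explicit directed paths from a parent $p_k$ of $B(k)$ to a vertex $u_i\in B(i)$, one of length $2$ through $B(k)$ and one of length $3$ through $B(k)$ and $B(j)$; the negative case is symmetric. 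Equivalently, the contradiction lives on a $5$-cycle with a directed $3$-path, not on any $4$-cycle. If you want to salvage your outline, you must allow the target cycle length $k$ to vary with the orientation and supply the bag dichotomy plus pigeonhole step --- which is precisely the missing idea.
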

\begin{proof} 
We know from Observation~\ref{the shift graph} that $G_{9,2}=L(\vec{T}_9)$, where $\vec{T}_9$ is the directed acyclic complete graph on $9$ vertices. Let $\sigma=\{v_1, v_2, \ldots, v_9\}$ be the topological ordering of the vertices of the acyclic complete digraph $\vec{T_9}$. Let $\B_{\sigma}=\{B(i): i=1, 2, \ldots , 8\}$ to be the bag decomposition of $L(\vec{T}_9)$. Suppose for the sake of contradiction $L(\vec{T}_9)$ have the AOP property. So there exists an acyclic orientation $\overrightarrow{L}(\vec{T}_9)$ of $L(\vec{T}_9)$ such that between any two vertices of $\overrightarrow{L}(\vec{T}_9)$ there is at most one directed path. 

Let $B(i)$ be a bag of $L(\vec{T}_9)$ with at least two vertices $u, u^*$ and at least two parents $p,q$. First, observe that any two parents of $B(i)$ together with any two vertices of $B(i)$ form a $4$ cycle. Suppose the parent $p$ of $B(i)$ has an incoming edge from $u$ and an outgoing edge to $u^*$. So $(u, p, u^*)$ is a directed path of length $2$ in the 4-cycle formed by $p, q, u, u^*$. But this is a contradiction to Lemma~\ref{orientation of n-cycle}. So every parent of $B(i)$ either has outgoing edges to all the vertices of $B(i)$ or has incoming edges to all the vertices of $B(i)$.

Now if there is a vertex $u \in B(i)$ such that one parent $p$ of $B(i)$ has an incoming edge from $u$ and another parent $q$ of $B(i)$ has an outgoing edge to $u$ in $\overrightarrow{L}(\vec{T}_9)$. So the $4$-cycle created by vertices $p, q, u, u^*$ again has a 2-directed path which gives a contradiction to Lemma~\ref{orientation of n-cycle}. Hence, if a bag $B(i)$ has at least $2$ vertices and at least two parents then either all the parents of the bag $B(i)$ must have only outgoing edges to all the vertices of $B(i)$ or all the parents of $B(i)$ must have only incoming edges from all the vertices of $B(i)$. So for a bag $B(i)$ if all its parents have only outgoing edges to all its vertices then we call the bag a positive bag. Similarly if for a bag $B(i)$ if all its parents have only incoming edges from all its vertices, then we call the bag $B(i)$ a negative bag.

We know $|B(i)|  \geq 2$ for all $2 \leq i \leq 8$, and $B(i)$ has at least two parents for all $2 \leq i \leq 6$. Hence all the bags $B(i)$, $2 \leq i \leq 6$ must either be a positive bag or a negative bag. So by pigeon-hole principle, among them, there must exist $3$ positive bags or $3$ negative bags. Suppose $B(i), B(j), B(k)$ are positive bags with $2\leq i<j<k \leq 6$. Let $p_k$ be a parent of $B(k)$ and $u_i$ be a vertex of $B(i)$. Let $p_{i,j}$ be a parent of $B(i)$ in $B(j)$. Also let $p_{i,k}$, $p_{j,k}$ denote the parent of $B(i)$ and $B(j)$ in $B(k)$ respectively. Observe that $P_1= (p_k, p_{i,k}, u_i)$ is a directed path from $p_k$ to $u_i$. Also $P_2= (p_k, p_{j,k}, p_{i,j}, u_i)$ is also a directed path from $p_k$ to $u_i$. So we have $2$ directed paths from $p_k$ to $u_i$ which is a contradiction. Similarly, if there exists $3$ negative bags say $B(i), B(j), B(k)$, with $2\leq i<j<k \leq 6$, by the exact same argument we will have $2$ directed paths from a vertex of $B(i)$ to the parent of $B(k)$. So this creates a contradiction. This finishes the proof.
\end{proof}

It is natural to ask whether there are induced subgraphs of shift graphs with the AOP property and arbitrary high chromatic number. If we get an induced subgraph with girth sufficiently larger than the chromatic number then we would be fine by Proposition~\ref{AOP property graphs}. But girth is large implies the induced subgraph is $C_4$-free but we know by Theorem~\ref{Complete bipartite free induced}, that any $C_4=K_{2,2}$ free induced subgraph has chromatic number bounded. Hence it is not possible to prove the existence of induced subgraphs of shift graphs with the AOP property and arbitrary high chromatic number by directly using Proposition~\ref{AOP property graphs}. But interestingly there are such subgraphs as we show next. The key to our construction is a simple, yet fundamental property of line digraphs of acyclic graphs which we prove below.

\begin{lemma}\label{AOP property of S(G)}
    If a digraph $\vec{G}$ is acyclic and between any two of its vertices there exists at most one directed path then the line digraph $\vec{L}(\vec{G})$ is acyclic and between any two of its vertices there exists at most one directed path.
\end{lemma}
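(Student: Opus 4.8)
The plan is to establish the two conclusions — acyclicity of $\vec{L}(\vec{G})$ and uniqueness of directed paths in it — separately, since acyclicity already follows from Observation~\ref{L(G) acyclic}. So the real content is the "at most one directed path" part, and the natural strategy is to transfer directed paths in $\vec{L}(\vec{G})$ back to directed paths (or walks) in $\vec{G}$ and use the hypothesis there. Recall that a vertex of $\vec{L}(\vec{G})$ is an arc of $\vec{G}$, and there is an arc $ab \to cd$ in $\vec{L}(\vec{G})$ precisely when $b=c$. Thus a directed path $P = (a_0b_0,\, a_1b_1,\, \ldots,\, a_kb_k)$ in $\vec{L}(\vec{G})$ satisfies $b_i = a_{i+1}$ for all $i$, which means $(a_0, a_1, \ldots, a_k, b_k)$ is a directed walk in $\vec{G}$ of length $k+1$; moreover, since $\vec{G}$ has at most one directed path between any two vertices, it is easy to see $\vec{G}$ in fact has no directed walk that revisits a vertex (a repeated vertex would give, by splitting, two distinct directed paths between some pair, or a directed cycle, contradicting acyclicity), so this directed walk is actually a directed \emph{path} in $\vec{G}$.

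Now I would argue by contradiction: suppose there are two distinct directed paths $P$ and $Q$ in $\vec{L}(\vec{G})$ from a vertex $xy$ (an arc of $\vec{G}$) to a vertex $zw$. Applying the translation above, $P$ yields a directed path in $\vec{G}$ of the form $(x, \ldots, z, w)$ starting with the arc $xy$ — so its second vertex is $y$ — and ending with the arc $zw$. Likewise $Q$ yields a directed path $(x, y, \ldots, z, w)$ in $\vec{G}$. Both start at $x$ with first arc $xy$ and both end at $w$ with last arc $zw$; hence both are directed paths from $x$ to $w$ in $\vec{G}$. By the hypothesis on $\vec{G}$, there is at most one such, so the two vertex-sequences in $\vec{G}$ coincide. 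The key step is then to recover $P = Q$ from the equality of these vertex-sequences: if $(x, a_1, a_2, \ldots, a_{k-1}, z, w)$ is the common sequence, then the arcs traversed are $xa_1, a_1a_2, \ldots, a_{k-1}z, zw$ in that order, and since $\vec{G}$ is simple (at most one arc between a given ordered pair) this sequence of arcs is uniquely determined, and it is exactly the vertex-sequence of both $P$ and $Q$ in $\vec{L}(\vec{G})$. Therefore $P = Q$, a contradiction.

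I would also need to handle the degenerate cases cleanly: a directed path of length $0$ in $\vec{L}(\vec{G})$ is a single arc $xy$, corresponding to the length-$1$ path $(x,y)$ in $\vec{G}$, and the uniqueness bookkeeping still goes through (two length-$0$ paths with the same endpoint are equal; a length-$0$ and a positive-length path in $\vec{L}(\vec{G})$ from $xy$ to $xy$ would translate to the trivial path and a longer closed directed walk at $x$, which $\vec{G}$'s acyclicity forbids). The one point requiring a little care — and the place I expect to spend the most effort — is the passage from "directed walk in $\vec{G}$" to "directed path in $\vec{G}$": I must justify that the hypothesis (acyclic, at most one directed path between any two vertices) rules out any vertex repetition along such a walk, so that the translated object is genuinely a path and the hypothesis on $\vec{G}$ applies to it directly. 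Everything else is just unwinding the definition of the line digraph and using simplicity of $\vec{G}$ to make the arc-sequence $\leftrightarrow$ vertex-sequence correspondence a bijection.
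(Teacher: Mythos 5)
Your proof is correct and follows essentially the same route as the paper: project each directed path of $\vec{L}(\vec{G})$ to a directed path in $\vec{G}$ (a path rather than a mere walk, which you justify via acyclicity where the paper uses monotonicity of the bag indices) and invoke the uniqueness hypothesis in $\vec{G}$. The only difference is cosmetic: you derive the contradiction by noting the projection is injective on paths since $\vec{G}$ is simple, whereas the paper first reduces to two internally disjoint paths and shows their projections differ in the penultimate vertex; both are valid.
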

\begin{proof}
Note that by Observation~\ref{L(G) acyclic}, we know that $\vec{L}(\vec{G})$ is acyclic. Now let  $v_1,v_2, \ldots, v_n$ be a topological ordering $\sigma$ of the vertices of the directed acyclic graph $\vec{G}$. Let $\B_{\sigma}  =\{B(i): i=1, 2, \ldots , n-1\}$ be the bag decomposition of $L(\vec{G})$. Suppose for the sake of contradiction there are two distinct directed paths with ends $w$ and $w^*$ in $\vec{L}(\vec{G})$. It trivially follows that there must exist two vertices $u$ and $u^*$ in $\vec{L}(\vec{G})$, such that there are two disjoint directed paths between them. Let $\vec{P_1}=(u=u_0, u_1, u_2, \ldots, u_p, u_{p+1}=u^*)$ and $\vec{P_2}=(u=u'_0, u'_1, u'_2, \ldots, u'_q, u'_{q+1}=u^*)$ be two disjoint directed paths with ends $u$ and $u^*$. Also since $\vec{L}(\vec{G})$ is a simple digraph, w.l.o.g assume that $q \geq 1$ (that is, $u'_q \neq u, u^*$).

Recall that from Observation~\ref{structure of S(G)}(ii), we know that if $u_1, u_2\in L(\vec{G})$ are adjacent then the edge between $u_1$ and $u_2$ is directed from $u_1$ to $u_2$ in $L(\vec{G})$ iff $I(u_1)<I(u_2)$. So we have $I(u)<I(u_i)<I(u_j)<I(u^*)$ for all $1 \leq i<j \leq p$ and $I(u)<I(u'_i)<I(u'_j)<I(u^*)$ for all $1 \leq i<j \leq q$. Hence from Observation~\ref{structure of S(G)}(ii), we know that $\vec{W_1}=(v_{I(u)}, v_{I(u_1)}, v_{I(u_2)}, \ldots, v_{I(u_p)}, v_{I(u^*)})$ and $\vec{W_2}=(v_{I(u)}, v_{I(u'_1)}, v_{I(u'_2)}, \ldots, v_{I(u'_q)}, v_{I(u^*)})$ are two directed paths in the graph $\vec{G}$ from the vertex  $v_{I(u)}$ to the vertex $v_{I(u^*)}$. Now if $\vec{W_1}$ and $\vec{W_2}$ are distinct directed paths, we immediately have a contradiction by the property of the digraph $\vec{G}$ and the proof follows. 

Now since $u'_q \neq u, u^*$ and $\vec{P_1}$ and $\vec{P_2}$ are disjoint, we have that $u'_q \neq u_p$. So $u_p$ and $u'_q$ are distinct vertices of $L(\vec{G})$ which are both adjacent to $u^*$, so by Observation~\ref{structure of S(G)}(iv) we know that both $I(u_p) \neq I(u'_q)$. Hence $\vec{W_1} \neq \vec{W_2}$ and the proof follows.
\end{proof}
Let $Z_n$ denote the $n^{th}$ Zykov graph~\cite{Zykov1949}. Recall that it is known that the Zykov graphs can be acyclically oriented such that there exists at most one directed path between any two of its vertices~\cite{Davies23, Girão2023,DBLP:journals/dm/KiersteadT92}. Let $\vec{Z}_n$ denote an acyclic orientation of $Z_n$ such that there exists at most one directed path between any two of its vertices. It is also well known that the Zykov graph is triangle-free and $\chi({Z_n})$ goes to infinity as $n$ tends to infinity.
Now using the above lemma and the properties of the Zykov graph we can construct induced subgraphs of the shift graph $G_{n,2}$ with high chromatic number and odd-girth, that have the AOP property. We make this precise in the following theorem by showing that the underlying undirected graph $L^g(\vec{Z}_n)$ of the line digraph $\vec{L}^g(\vec{Z_n})$, which is also an induced subgraph of $G_{n,2}$ by Observation~\ref{the induced shift graph}, has all the required properties.

\begin{theorem}\label{AOP graphs with high chromatic number and odd girth} 
    Let $g \in \mathbb{N}$ be fixed. Then the graph $L^g(\vec{Z}_n)$, where $\vec{Z}_n$ is an acyclic orientation of the $n^{th}$ Zykov graph, can be acyclically oriented such that between any two vertices there exists at most one directed path. Moreover, the chromatic number of the graphs $L^g(\vec{Z}_n)$ goes to infinity as $n$ goes to infinity and $L^g(\vec{Z}_n)$ have odd-girth greater than or equal to $2g+3$ for all $n\in \mathbb{N}$.
\end{theorem}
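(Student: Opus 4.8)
The plan is to verify the three assertions separately, each being a short reduction to a structural fact already in hand.

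For the AOP property I would fix, using the cited results on Zykov graphs, an acyclic orientation $\vec{Z}_n$ of $Z_n$ in which there is at most one directed path between any two vertices, and then induct on $g$. The base case $g=0$ is exactly this hypothesis. For the inductive step, assuming $\vec{L}^{g-1}(\vec{Z}_n)$ is acyclic with at most one directed path between any two vertices, apply Lemma~\ref{AOP property of S(G)} with $\vec{G}=\vec{L}^{g-1}(\vec{Z}_n)$: its conclusion is precisely that $\vec{L}^{g}(\vec{Z}_n)=\vec{L}(\vec{L}^{g-1}(\vec{Z}_n))$ is again acyclic and has at most one directed path between any two of its vertices. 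Thus $\vec{L}^{g}(\vec{Z}_n)$ is an acyclic orientation of $L^{g}(\vec{Z}_n)$ witnessing the AOP property.

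For the chromatic number I would apply Corollary~\ref{chromatic iteration} with $\vec{G}=\vec{Z}_n$, which gives $\chi(L^{g}(\vec{Z}_n))\geq \log_2\log_2\cdots\log_2(\chi(Z_n))$ with the logarithm iterated $g$ times. Since $g$ is fixed and $\chi(Z_n)\to\infty$ as $n\to\infty$ (a known property of Zykov graphs), the $g$-fold iterated logarithm of $\chi(Z_n)$ is eventually well defined and tends to infinity: given any $M$, for $n$ large enough that $\chi(Z_n)$ exceeds the $g$-fold tower of exponentials of $M$ one gets $\chi(L^{g}(\vec{Z}_n))\geq M$. Hence $\chi(L^{g}(\vec{Z}_n))\to\infty$.

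For the odd-girth, first note that Zykov graphs are triangle-free, so $Z_n$ has no $3$-cycle. If $Z_n$ is bipartite, then by the contrapositive of the walk argument in the proof of Lemma~\ref{odd girth} (a closed odd walk in $L(\vec{G})$ forces a strictly shorter closed odd walk in $G$, hence a shorter odd cycle), $L^{g}(\vec{Z}_n)$ is also bipartite and the bound holds vacuously. Otherwise $Z_n$ has an odd cycle, and being triangle-free it has odd-girth at least $5$. I would then show by induction on the number of iterations that one application of the line-digraph operation raises the odd-girth by at least $2$: reading Lemma~\ref{odd girth} with $2g'-1$ taken to be the \emph{exact} (finite) odd-girth of the current acyclic digraph yields odd-girth at least that value plus $2$ for its line digraph, and since the odd-girth is still finite the graph still has an odd cycle so the next step applies. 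Starting from odd-girth$(Z_n)\geq 5$ this gives odd-girth$(L^{g}(\vec{Z}_n))\geq 5+2g\geq 2g+3$, as required. None of the three verifications carries real difficulty—the substantive work has already been absorbed into Lemma~\ref{AOP property of S(G)}—so the only (minor) point demanding care is handling the degenerate bipartite case and using Lemma~\ref{odd girth} in its inequality form rather than the literal "equality" phrasing.
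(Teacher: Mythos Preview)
Your proposal is correct and follows essentially the same route as the paper: invoke Lemma~\ref{AOP property of S(G)} iteratively (the paper just says ``applying Lemma~\ref{AOP property of S(G)}, $g$ times''), use Corollary~\ref{chromatic iteration} for the chromatic number, and use Lemma~\ref{odd girth}/Corollary~\ref{girth iteration} together with the triangle-freeness of $Z_n$ for the odd-girth. Your extra care with the bipartite case and the ``equality'' phrasing of Lemma~\ref{odd girth} is harmless padding---for the relevant $n$ the Zykov graphs have $\chi(Z_n)\geq 3$ and hence are not bipartite, and the paper simply cites Corollary~\ref{girth iteration} without fussing over this.
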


\begin{proof}
    As $\chi({Z_n})$ goes to infinity as $n$ tends to infinity, using Corollary \ref{chromatic iteration}, we have that the chromatic number of the sequence of graphs $\{L^g(\vec{Z}_n)\}$ goes to infinity as $n$ goes to infinity. Now from Corollary \ref{girth iteration}, we know that for all $n \in \mathbb{N}$, the graph $L^g(\vec{Z}_n)$ have odd-girth greater than or equal to $2g+3$ as the graph $Z_n$ has odd-girth greater than or equal to $5$. And finally applying Lemma \ref{AOP property of S(G)}, $g$ times, we know that for all $n \in \mathbb{N}$, $\vec{L}^g(\vec{Z_n})$ is acyclic and between any two of its vertices there exists at most one directed path. This finishes the proof.
\end{proof}

It is to be noted that the existence of graphs with large girth and chromatic number with the AOP property were already known~\cite{Girão2023} and can also be constructed. But our construction, although weaker in a certain sense (only large odd-girth), is very different. Specifically, our construction is an induced subgraph of the shift graph and the orientation that proves AOP property is none other than the induced natural orientation of the shift graph. As a consequence, this also shows that there exists graphs with arbitrarily large chromatic number, whose AOP orientation does not admit a $4$ vertex path $\vec{P}_4$ oriented $\rightarrow\leftarrow\rightarrow$ as an induced subdigraph.

\section{Concluding remarks}
\label{sec:conclusions}
One important open question that still remains is whether the constant $c_F$ in Theorem~\ref{girao} is bounded by a function of $\chi(F)$. More specifically for the class of complete bipartite graphs, is $c_F$ uniformly bounded by a constant or is it still a function of the number of vertices of $F$? It would also be nice to find connections between the AOP property of graphs with other graph parameters or structural properties.

\section{Acknowledgements}
I would like to thank the anonymous reviewer for detailed comments that helped to improve the overall quality of the paper and also for pointing out that the bound previously obtained in Theorem~\ref{Complete bipartite free induced} can be improved. I would like to thank Sophie Spirkl for some thoughtful discussions. I would also like to thank Mark de Berg for reading the paper carefully and helping to improve the writing of the paper. Finally, I thank Frits Spieksma, Shivesh Roy and Rounak Ray for some useful suggestions. The work is supported by the  Dutch Research Council (NWO) through Gravitation-grant NETWORKS-024.002.003.

\bibliography{references}

\newpage

\end{document}